\numberwithin{equation}{section}       
\theoremstyle{plain}
\newtheorem{theorem}{Theorem}[section]
\newtheorem{prop}{Proposition}[section]
\newtheorem{conj}[prop]{Conjecture}
\newtheorem{coro}[prop]{Corollary}
\newtheorem{lemma}[prop]{Lemma}
\newtheorem{definition}[prop]{Definition}
\newtheorem{remark}[prop]{Remark}
\newtheorem{exam}[prop]{Example}
\newtheoremstyle{citing}
  {3pt}
  {3pt}
  {\itshape}
  {}
  {\bfseries}
  {.}
  {.5em}
  {\thmnote{#3}}
\theoremstyle{citing}
\DeclareMathAlphabet{\mathpzc}{OT1}{pzc}{m}{it} 
\newcommand{\C}{\mathbb{C}}
\newcommand{\D}{\mathbb{D}}
\newcommand{\cC}{\mathcal{C}}
\newcommand{\cF}{\mathcal{F}}
\newcommand{\cK}{\mathcal{K}}
\newcommand{\cN}{\mathcal{N}}
\newcommand{\cV}{\mathcal{V}}
\newcommand{\teta}{\widetilde{\teta}}
\newcommand{\eps}{\varepsilon}
\newcommand{\dist}{d}
\DeclareMathOperator{\diam}{diam}
\DeclareMathOperator{\supp}{supp} 
\begin{document}

\title[]{On invariant line fields of rational functions}

\author{Genadi Levin}

\address{Institute of Mathematics, The Hebrew University of Jerusalem, Givat Ram,
Jerusalem, 91904, Israel}

\email{levin@math.huji.ac.il}








\date{\today}




\maketitle

\begin{abstract} Adopting the approach
of \cite{L} we study rational function carrying invariant line fields on the Julia set. In particular,
we show that under certain weak conditions all possible measurable invariant line fields of a rational function on its Julia set are determined, in a precise sense, by a finite list of canonically defined
holomorphic line fields.
\end{abstract}

\section{Introduction}

There are two central conjectures in the dynamics of rational maps of the Riemann sphere.
\begin{conj}\label{conj:f} (Density of hyperbolicity, or Fatou). Any rational function is approximated by hyperbolic rational functions of the same degree.
\end{conj}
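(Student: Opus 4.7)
This is the celebrated Fatou conjecture, which remains open in full generality, so any ``proof plan'' can only be a sketch of the standard strategy together with the partial progress that the present paper aims to contribute. My plan would be to pass through the Ma\~n\'e--Sad--Sullivan dichotomy: the space $\mathrm{Rat}_d$ of degree-$d$ rational maps decomposes into an open dense $J$-stable locus and its complement, so it suffices to prove that every $J$-stable rational map is hyperbolic. Equivalently, via the McMullen--Sullivan reformulation, one reduces Conjecture 1.1 to the \emph{no invariant line fields conjecture}: the Julia set of a non-Latt\`es rational function carries no measurable invariant line field of positive Lebesgue measure. This reduction is a theorem, so the real content is the second conjecture.

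Given that reduction, my approach would be the one adopted in this paper (following \cite{L}). The first step is to assume, for contradiction, that $f$ is $J$-stable but not hyperbolic and that a measurable invariant line field $\mu$ exists on a positive-measure subset of $J(f)$. The second step is to exploit the recurrence of typical points of $\mu$ together with the dynamics along the postcritical set: at almost every point where $\mu$ is defined, one chooses a sequence of deep returns and pulls back $\mu$ along suitable inverse branches with bounded distortion, producing renormalized local pictures in which $\mu$ must agree, up to small measurable error, with the pullback of a holomorphic line field coming from the local geometry near the postcritical accumulation. The third step is to show that these canonical holomorphic line fields form a \emph{finite} list, determined by the combinatorics of the critical orbits, and that $\mu$ must coincide almost everywhere with one of them on each ergodic component. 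This is the precise statement that the abstract of the present paper promises, and it would be the conclusion my plan could actually reach.

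The main obstacle, and the reason Conjecture 1.1 is still open, is the final step: even after constraining $\mu$ to coincide almost everywhere with a holomorphic line field, one must rule out the coexistence of a measurable invariant line field with a genuinely holomorphic one. Measurably this forces rigid algebraic identities on the Julia set that are known to be satisfiable only by flexible Latt\`es examples, but \emph{proving} the incompatibility in full generality requires controlling the geometry of inverse branches at essentially every scale around $\mu$-typical points, and no general such control is available outside of cases like unicritical polynomials of bounded combinatorics (Avila--Kahn--Lyubich--Shishikura) or maps with only hyperbolic or parabolic critical points. My plan therefore does not close the conjecture; it only narrows the unknown from ``arbitrary measurable line fields'' to ``a finite, canonically described holomorphic list,'' which is precisely the contribution the paper is preparing to make.
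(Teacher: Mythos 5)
You correctly identify that Conjecture~\ref{conj:f} is an open conjecture that the paper states but does not (and cannot) prove, and your sketch of the standard reduction via Ma\~n\'e--Sad--Sullivan and McMullen--Sullivan to Conjecture~\ref{conj:lf}, together with the paper's partial contribution of constraining invariant line fields by a finite list of canonical holomorphic ones, is accurate. No further comment is needed since there is no proof in the paper to compare against.
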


In turn, by \cite{MSS}, \cite{MS}, this conjecture would follow from a stronger:
\begin{conj}\label{conj:lf} (No invariant line field). A rational map $f$ carries no invariant line field on its Julia set $J$, except when $f$ is double covered by an integral torus endomorphism.
\end{conj}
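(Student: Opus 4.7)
The plan is to attack the conjecture in three stages, using quasiconformal rigidity to reduce it to a structural analysis of holomorphic line fields. Suppose $f$ admits a measurable invariant line field $\mu$ on $J(f)$. First, I would set up the holomorphic family of deformations: for $t \in \D$, let $h_t$ solve the Beltrami equation $\bar\partial h_t = t\mu\, \partial h_t$ and form $f_t = h_t \circ f \circ h_t^{-1}$. By the $\lambda$-lemma of \cite{MSS}, $\{f_t\}$ is a holomorphic family of rational maps of the same degree carrying a nontrivial holomorphic motion of $J(f)$. The target is to show that this family must be trivial outside the Latt\`es case.

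Second, and this is the core of the approach developed via \cite{L}, I would prove that $\mu$ agrees almost everywhere on $J(f)$ with one line field from a canonical finite list of holomorphic line fields determined by $f$. The idea is to use univalent pullbacks along deep returns to transport $\mu$ to nearby scales at which, after rescaling, it is well approximated by a holomorphic field; this is where the weak conditions alluded to in the abstract enter. Each forward orbit of a critical value carries its own preferred local holomorphic line field, coming from linearization, B\"ottcher coordinates, or rescaling limits near the postcritical set, and these assemble into the desired finite list. Invariance of $\mu$, combined with an ergodicity or density argument on $J(f)$, then forces $\mu$ to coincide almost everywhere with exactly one member of the list.

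Third, I would analyze these canonical holomorphic invariant line fields directly. A globally holomorphic $f$-invariant line field on an $f$-invariant open set, pulled back via the Thurston orbifold of $f$ to a universal cover, becomes a translation-invariant line field. Growth estimates rule out the hyperbolic cover $\D$, so the cover is $\C$, the orbifold is Euclidean, and $f$ is a Latt\`es map, i.e.\ double covered by an integral torus endomorphism. This is precisely the excluded case, completing the contradiction.

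The chief obstacle is the second stage. Transporting a merely measurable invariant line field to holomorphic representatives at deep scales demands precise control of the distortion of inverse branches of iterates of $f$ landing near the postcritical set, together with a combinatorial mechanism to glue locally defined holomorphic candidates into a globally coherent finite list. The ``weak conditions'' of the abstract presumably isolate the class of $f$ for which such distortion and recurrence control is available, typically by limiting the extent of self-accumulation of critical orbits; extending this rectification step to arbitrary rational maps is, in effect, the remaining content of the full conjecture.
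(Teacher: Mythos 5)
This statement is a \emph{conjecture} (Conjecture~\ref{conj:lf}), not a theorem, and the paper never claims to prove it; the paper's actual results are partial and conditional. Your three-stage plan is a reasonable blueprint for how one might attack it, but stage~2 as you have stated it is precisely the open part, and you have overstated what the approach of \cite{L} (and this paper) actually deliver.

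Concretely: the paper does \emph{not} prove that a measurable invariant line field $\mu$ on $J$ agrees almost everywhere with a holomorphic line field from a canonical finite list. What Theorem~\ref{thm:main} (and Corollary~\ref{coro:quadr}) prove is much weaker: given a shrinking sequence of fundamental sets $K_n$ around the postcritical set, the line fields $\sigma^\psi_n$ --- built by restricting the holomorphic field $\sigma^\psi=|r_\psi|/r_\psi$ to the thin annular region $K_n'\setminus K'_\infty$ and then pushing it around by pullback --- converge to $\mu$ in measure on $\supp(\mu)$. Equivalently, $(\mu-\sigma^\psi)\circ R_{K_n'}\to 0$ in measure, where $R_{K_n'}$ is the first return map. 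This is a statement about agreement \emph{at deep return times to a shrinking neighborhood of $P$}, not a.e.\ agreement of $\mu$ with $\sigma^\psi$ on $J$. The gap between ``$\mu$ is asymptotically holomorphic along returns to $P$'' and ``$\mu$ is holomorphic'' is exactly the remaining content of the conjecture, and you cannot close it with an ergodicity or density argument alone: $\mu$ is only $L^\infty$ and the sets where $\sigma^\psi_n$ is defined have measure tending to zero before pullback. Your stage~3 is likewise premature, since a holomorphic line field on an open set invariant under $f$ is indeed rigid, but the object you have after a correct stage~2 is not such a thing --- it is a measurable line field with $|\mu|=1$ a.e.\ on $J$, holomorphicity of which is what one would need to establish, not assume. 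In short: stages~1 and~3 are standard and sound, but stage~2 is not a lemma you can cite or prove with the tools in \cite{L} or this paper; it is the conjecture itself in disguise. The paper's genuine contribution is to make the approximation in stage~2 precise and to leverage it, under a strong additional hypothesis (unbranched complex bounds), to derive a contradiction via a rescaling/compactness argument in Section~\ref{app} --- a very different mechanism than the a.e.\ identification you propose.
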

For the latter, exceptional maps (called also flexible Lattes maps), see e.g. \cite{Mlattes}.
These are explicitly described critically finite rational functions with the Julia set the whole Riemann sphere.
In particular, this conjecture claims that there is no an invariant line field on $J$ if the Fatou set $F:=\hat\C\setminus J$ is non-empty,
e.g., if $f$ is a polynomial.

Recall that a {\it line field} on $J$ is a Beltrami differential $\mu(z)d\bar{z}/dz$ where $\mu\in L_\infty(\hat\C)$, such that $|\mu|=1$ on a subset of $J$ of positive measure and $0$ elsewhere. A line field is {\it holomorphic} if locally $\mu=|\phi|/\phi$ where $\phi$ is a holomorphic function.
An {\it invariant line field} on $J$ is a line field $\mu(z)d\bar{z}/dz$ where $\mu$ is a fixed point
of the pushforward operator
$$f^*: \nu\mapsto  (\nu\circ f) \frac{|f'|^2}{(f')^2}.$$
In this case, we say for short that $f$ {\it carries an invariant line field} $\mu$.

Conjecture \ref{conj:lf} allows one to move the focus from families of rational maps to the dynamics of an individual map.
For the quadratic family $f_c(z)=z^2+c$, Conjectures \ref{conj:f}-\ref{conj:lf} are equivalent and mean that the interior of the Mandelbrot set $M=\{c\in\C: J(f_c) \mbox{ is connected}\}$ consists of hyperbolic maps.
A stronger, MLC conjecture says that $M$ is locally connected \cite{dhorsay}.

A criterion for the absence of invariant line fields was developed in \cite{mcm}, \cite{mcm1}, see also \cite{shen}.
Following it, all "robust" infinitely-renormalizable polynomials have no invariant line fields, see \cite{mcm}, \cite{inou}, where "robustness" is defined through
the geometry of Riemann surfaces associated to the postcritical set.
A closely related to this is the property of an infinitely renormalizable map to have so-called a priori, or complex bounds which means a pre-compactness of an infinite sequence of renormalizations \cite{Su}; a priori bounds usually lead to important consequences.
A lot of efforts have been made to establish such kind of bounds for various classes of maps, see e.g. \cite{lICM} for an overview.
On the other hand,
there exist many, in particular quadratic, maps which are not robust and/or having no a priori bounds, see \cite{Mildh}, \cite{CP}, \cite{blot}
and references therein.





\



Let us outline a type of results of the paper and then state it more precisely for the quadratic family.

Throughout the paper, $area(A)$ is the Lebesgue measure of $A\subset\C$ and $|dx|^2$ is the area element at the point $x\in\C$
(sometimes we omit $|dx|^2$ in the integral though).

Let
$$P=\overline{\{f^n(c), n=1,2,..., \mbox{ over all critical points } c\in\hat\C \mbox{ of } f\}}$$
be the postcritical set of $f: \hat\C\to \hat\C$.
Recall that $P$ contains all attracting and parabolic cycles of $f$ as well as the boundaries of rotation domains.
Let
$P_J=P\cap J$.
\begin{definition}\label{def:basic}
A subset $K\subset J$ is a fundamental set if:
\begin{enumerate}
\item $P_J\subset K$ and $f(K)\subset K$,
\item $area(K)>0$, $area(K')>0$ where $K':=f^{-1}(K)\setminus K$, and
$$area(J\setminus \{K\cup\cup_{j=0}^\infty f^{-j}(K')\})=0.$$
\end{enumerate}
\end{definition}
If $K$ is fundamental, then $K, f^{-j}(K'), j=0,1,...,$ form a partition of $J$ up to a null (measure) set.


As it follows almost immediately from \cite{ly}, \cite{mcm}, assuming $P\neq J$, in all interesting for us cases any small neighborhood of the postcritical set of $f$ contains a fundamental set, see Lemmas \ref{kk'}, \ref{kk'-c}.
Another natural choice of fundamental sets are orbits of small Julia sets of infinitely-renormalizable unicritical polynomials, see Example \ref{ex:renorm}.


To a given rational map $f$, we assign a finite collection of canonical functions $\{r\}$ which are holomorphic outside of $P$ union the rotation domains of $f$. Each $r$ defines a holomorphic line field $\sigma=|r|/r$. It turns out that they "discover" all possible invariant line fields which are supported in $J\setminus \cup_{i\ge 0} f^{-i}(P)$.
Namely, for any invariant line field $\mu$ with its support $E_\mu\subset J\setminus \cup_{i\ge 0} f^{-i}(P)$ there is some $\sigma$ as above such that
$\mu$ and $\sigma$ are close in the following sense.
Given a fundamental set $K$, let
$\Delta_K=(\mu-\sigma)\circ R_{K'}$ where $R_{K'}$ is the first return map to $K'$.
Then
$$\Delta_K\to 0 \mbox {  in measure on } E_\mu \mbox{ as } K \mbox{ shrinks to } P.$$

To define functions $r$, we use the {\it pullback, or Ruelle-Thurston operator}:
$$(T_f g)(x)=\sum_{y: f(y)=x} \frac{g(y)}{(f'(y))^2}$$
whenever the right-hand side is defined.
The pushforward operator $f^*$ is dual to $T$ meaning that
$\int_J g f^*\nu =\int_J \nu T_f g$, for $g\in L_1(J)$, $\nu\in L_\infty(J)$ whenever both sides make sense.




\


\


For the quadratic family  $f_c(z)=z^2+c$,
we let
$$r(z)=(1-T_{f_c})^{-1}\frac{1}{z-c}.$$
We show below (in a more general setting, see Propositions \ref{psi} and \ref{psi-c}) that $r$ is a well-defined holomorphic function outside of the postcritical set
$P_c$
of $f_c$ union the rotation domains $\cN_c$ of $f_c$.

The following remark gives some impression how the function $r$ looks like (it will not be used in the rest of the paper though).
\begin{remark}\label{r:expl}
By Proposition \ref{psi} of Sect \ref{AS} and explicit formulas of \cite{L}, one can see that, for every $x\in\C\setminus (P_c\cup \cN_c)$:
$$r(x)=\lim_{\lambda\nearrow 1} D(\lambda)^{-1}\sum_{k=0}^\infty \frac{\lambda^k}{(f_c^k)'(c)}\frac{1}{x-f_c^k(c)},$$
where
$$D(\lambda)=\sum_{k=0}^\infty \frac{\lambda^k}{(f_c^k)'(c)}$$
and all series converge in $\D=\{|\lambda|<1\}$ by \cite{LPS}.
\end{remark}
Assume that $f_c$ is infinitely renormalizable (otherwise, by Yoccoz's results \cite{Yfields}, $f_c$ carries no invariant line fields on its Julia set).
Let $p_n\to\infty$ be periods of simple renormalizations of $f_c$ \cite{mcm}.
Given $n\ge 1$, let $J_n$ be the cycle of "small" Julia sets corresponding to the period $p_n$, that is,
$J_n=\cup_{i=0}^{p_n-1} f_c^i(J_n(0))$ where $J_n(0)\ni 0$ is the small Julia set containing zero.
Let finally $P_n=P\cap J_n(0)$. Then sets $f_c^i(P_n)$, $j=0,1,...,p_n-1$, are pairwise disjoint.



Let us fix any decreasing sequence of fundamental sets $(K_n)$ of $f_c$. Denote: $K'_n=f_c^{-1}(K_n)\setminus K_n$,
$K_\infty=\cap_{n>0}K_n$, $K'_\infty=f_c^{-1}(K_\infty)\setminus K_\infty$ and
$K_{\pm\infty}=\cup_{i\ge 0}f_c^{-i}(K_\infty)$.

Let
$$\sigma=\frac{|r|}{r}.$$
For every $n$, we define a {\it line field $\sigma_n d\bar z/dz$ supported on}
$$\cup_{i\ge 0}f_c^{-i}(K'_n\setminus K'_\infty)$$
as follows.
Let us restrict $\sigma$ to $K'_n\setminus K'_\infty$ and spread the restriction over all preimages thus letting
$\sigma_n=(f^i)^*(\sigma|_{K'_n\setminus K'_\infty})$ on $f^{-i}(K'_n\setminus K'_\infty)$, for all $i\ge 0$, and $\sigma_n=0$ elsewhere.




It is well-known that a unicritical polynomial $f_c$ can have at most one invariant line field $\mu$ on $J_c$, up to rotation, moreover, if $\mu$ is such a line field then
$\int_{J_c}\frac{\mu(w)}{w-c}|dw|^2\neq 0$ (\cite{MS}, \cite{L}, also Proposition \ref{fix}, Corollary \ref{c-neq} below). Replacing $\mu$ by $t\mu$ with some $|t|=1$
we can (and will) assume w.l.o.g., that
$$\hat\mu(c):=\int_{J_c}\frac{\mu(w)}{w-c}|dw|^2>0.$$

\begin{coro}\label{coro:quadr}

The following conditions are equivalent:
\begin{enumerate}
\item $f_c$ carries an invariant linefield $\mu$ such that
$$E_\mu:=supp(\mu)\subset J\setminus \cup_{i\ge 0} f^{-i}(K_\infty)$$
(e.g. this holds whenever the measure of $K_\infty$ is zero
\footnote{A rather weak sufficient condition for the postcritical set $P$ to be a null set is given in \cite{mcm}, Theorem 2.4 and Remarks followed; in particularly, this holds for robust polynomials});
\item
\begin{equation}\label{intk'nquadr}
\liminf_{n>0}\int_{K_n'\setminus K'_\infty}|r(x)| |dx|^2>0.
\end{equation}

Moreover, there exist and are equal
$$\lim_{n\to\infty}\int_{E_\mu\cap K_n'}|r(x)| |dx|^2=\lim_{n\to\infty}\int_{K_n'} r(x)\mu(x) |dx|^2
=\hat\mu(c)
>0.$$

\item the sequence $\{\sigma_{n}\}\subset L_\infty(J)$ converges in the weak* topology to the invariant line field $\mu$,


\item

(a) a stronger convergence holds:
\begin{equation}\label{coro:inmeasure}
\sigma_n\to \mu \mbox{ as } n\to\infty \mbox{ {\bf in measure} on the support } E_\mu \mbox{ of } f.
\end{equation}


Equivalently, let
$$\Delta_n=(\mu-\sigma)\circ f^j \mbox{ on } f^{-j}(K_n'\cap E_\mu) \mbox{ for } j=0,1,2,...$$
Then
\begin{equation}\label{coro:delta}
\Delta_n\to 0 \mbox {  in measure on } E_\mu \mbox{ as } n\to\infty.
\end{equation}

(b) Assuming, additionally, that $K_\infty:=\cap_{n>0}K_n=P$, yet a stronger version of (\ref{coro:delta}) holds:

for every $m$ and every neighbourhood $V$ of $P_m$ such that $\overline{V}\cap (P\setminus P_m)=\emptyset$,
\begin{equation}\label{coro:delta0}
\Delta^{V}_n\to 0 \mbox {  in measure on } E_\mu \mbox{ as } n\to\infty
\end{equation}
where
$$\Delta^{V}_n=(\mu-{\bf 1}_{V}\times  \sigma)\circ f^j \mbox{ on } f^{-j}(K_n'\cap E_\mu) \mbox{ for } j=0,1,2,....$$
and ${\bf 1}_{V}$ is the indicator function of $V$.

\end{enumerate}

\end{coro}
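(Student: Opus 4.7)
The central tool is the duality $\int g\,f^*\nu = \int \nu\,T_f g$ combined with the defining relation $(1-T_{f_c})r = 1/(z-c)$. I would begin with the implication (1) $\Rightarrow$ (2), including the explicit limit formula, as this is the computational heart of the corollary. Iterating the defining relation gives $r - T_{f_c}^N r = \sum_{k=0}^{N-1} T_{f_c}^k\bigl(1/(z-c)\bigr)$, and for an invariant line field $\mu$ with $f_c^*\mu = \mu$, duality rewrites this as
$$\int_{K'_n} r\,\mu \;-\; \int_{f_c^{-N}(K'_n)} r\,\mu \;=\; \sum_{k=0}^{N-1}\int_{f_c^{-k}(K'_n)} \frac{\mu(w)}{w-c}\,|dw|^2.$$
Letting $N \to \infty$, the boundary term tends to zero (a shrinking $L^1$-mass argument on the backward tower), while the right-hand side converges by dominated convergence together with the fundamental-partition identity $J \setminus K_n = \bigsqcup_{k\ge 0} f_c^{-k}(K'_n)$ (up to null sets) to $\int_{J\setminus K_n} \mu(w)/(w-c)\,|dw|^2$. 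Because $E_\mu$ avoids every $f_c^{-i}(K_\infty)$, sending $n\to\infty$ yields $\hat\mu(c)>0$. Combined with $|\!\int r\mu| \le \int|r|$ and the vanishing of $\mu$ on $K'_\infty$, this gives (2) and both claimed equalities of limits.

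\textbf{From (2) to (3) and (4).} For the converse I take a weak* subsequential limit $\nu$ of the bounded sequence $\{\sigma_n\}\subset L^\infty(J)$. By construction $\sigma_n$ is pointwise $f_c^*$-invariant away from $K'_n$, so any weak* limit $\nu$ is an invariant Beltrami differential. A direct check shows $\sigma_n - f_c^*\sigma_n = \mathbf{1}_{K'_n\setminus K'_\infty}\cdot \sigma$, hence duality together with $(1-T_{f_c})r = 1/(z-c)$ gives the exact identity
$$\int_{K'_n\setminus K'_\infty} |r(z)|\,|dz|^2 \;=\; \int_J \frac{\sigma_n(z)}{z-c}\,|dz|^2.$$
Passing to the limit and invoking (2) yields $\hat\nu(c)>0$, in particular $\nu\not\equiv 0$. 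By the uniqueness of the invariant line field in the unicritical case (Proposition \ref{fix}, Corollary \ref{c-neq}), $\nu = t\mu$ with $|t|\le 1$, and matching normalizations forces $t=1$ so that $|\nu|=1$ a.e. on $E_\mu$. Since $|\sigma_n|$ and $|\mu|$ both equal $1$ on $E_\mu$, the polarization identity $\int_{E_\mu}|\sigma_n - \mu|^2 = 2\int_{E_\mu}(1-\mathrm{Re}(\sigma_n\overline{\mu}))$ upgrades weak* convergence on $E_\mu$ to $L^2$-convergence, hence to convergence in measure, giving (\ref{coro:inmeasure}). The reformulation (\ref{coro:delta}) follows from $\sigma_n\circ f_c^j = \sigma$ on $f_c^{-j}(K'_n\setminus K'_\infty)$ by definition, and (\ref{coro:delta0}) from the pairwise disjointness of $f_c^i(P_m)$ once $K_\infty = P$, which lets one replace $\sigma$ by $\mathbf{1}_V\sigma$ without affecting the limit.

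\textbf{Main obstacle.} The delicate issues are (i) justifying the vanishing of the tail term $\int_{f_c^{-N}(K'_n)} r\mu$ as $N\to\infty$, uniformly enough to take the limit in $n$ afterwards; and (ii) upgrading the weak* limit to an honest line field, i.e., verifying $|\nu|=1$ a.e.\ on its support rather than a strictly subnormal Beltrami form. Point (i) reduces to absolute summability of the integrals of $|w-c|^{-1}$ along backward orbits of $K'_n$ landing near $P$, which should follow from the bounded-distortion input built into the construction of fundamental sets (Lemmas \ref{kk'}, \ref{kk'-c}). Point (ii) is in essence the equality case in $|\!\int r\nu| \le \int|r|\,|\nu|$, made rigorous by the tightness from (2) and the rotational uniqueness of $\mu$.
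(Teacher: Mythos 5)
Your plan for the equivalence of (1)--(4a) reproduces, in slightly different packaging, the machinery the paper cites from Proposition~\ref{propmain} and Theorem~\ref{thm:main}: duality of $T$ and $f^*$, the resolvent identity, weak* compactness of the bounded sequence $\{\sigma_n\}$, and rotational uniqueness of the invariant line field in the unicritical case. The one real technical deviation is that you iterate the resolvent relation to get $r - T^N r = \sum_{k<N} T^k\psi$ and must then kill the boundary term $\int_{f^{-N}(K'_n)} r\mu$; the paper sidesteps this by using the \emph{already-established} locally uniform convergence of the series $r = \sum_{i\ge 0} T^i\psi$ (Proposition~\ref{psi}), so no tail term appears. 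Your route is valid, but your diagnosis of why the tail vanishes is off: it is not bounded distortion of the fundamental sets that matters but simply the fact that $\psi = 1/(z-c)\in L^1$ and the sets $f^{-j}(K'_n)$, $j\ge 0$, tile $J\setminus K_n$ disjointly, giving $\int_{f^{-N}(K'_n)}|r|\le\int_{\bigcup_{j\ge N}f^{-j}(K'_n)}|\psi|\to 0$. Note also that $r\notin L^1(J)$ whenever an invariant line field exists (Lemma~\ref{psinonint}), so ``shrinking $L^1$-mass of $r$'' is not available. Your polarization identity $\int_{E_\mu}|\sigma_n-\mu|^2 = 2\int_{E_\mu}(1-\mathrm{Re}(\sigma_n\bar\mu))$ implicitly assumes $|\sigma_n|=1$ a.e.\ on $E_\mu$, which fails on $E_\mu\cap K_n$ (where $\sigma_n=0$); replace the equality by the one-sided estimate $\int_{E_\mu}|\sigma_n-\mu|^2\le 2\int_{E_\mu}(1-\mathrm{Re}(\sigma_n\bar\mu))$, valid because $|\sigma_n|\le 1$, and the conclusion survives.

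The genuine gap is item (4b). Your justification --- ``pairwise disjointness of $f_c^i(P_m)$ once $K_\infty=P$, which lets one replace $\sigma$ by $\mathbf{1}_V\sigma$ without affecting the limit'' --- does not explain why replacing $\sigma$ by $\mathbf{1}_V\sigma$ is harmless in the convergence-in-measure statement. What needs to be shown is that the total area of $\bigcup_{j\ge 0} f^{-j}(K'_n\setminus V)$ tends to $0$ as $n\to\infty$. The paper's argument is genuinely geometric: $K'_n\setminus V$ sits inside a union $W'$ of finitely many simply connected neighborhoods of the sets $-f^i(P_m)$, $i\ne 0$, each disjoint from $P$; on these components all inverse branches have uniformly bounded Koebe distortion, which gives an area comparison $\operatorname{area}(g(E))/\operatorname{area}(g(K'_N\cap U))\le C\,\operatorname{area}(E)/\operatorname{area}(K'_N\cap U)$ for a fixed $N$; combined with $\sum_j\operatorname{area}(f^{-j}(K'_N))\le\operatorname{area}(J_c)<\infty$ and $\operatorname{area}(K'_n\setminus K'_\infty)\to 0$, one deduces $\operatorname{area}(\bigcup_j f^{-j}(K'_n\cap W'))\to 0$. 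Disjointness alone supplies the needed Koebe neighborhoods but is not the argument; without the distortion-and-area step, (4b) remains unproved in your proposal.
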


For the proof of Corollary, see Sect \ref{PR}.

\


For the rest of the paper, let us fix $f$.
Naturally,
\begin{itemize}
\item
either the Fatou set of $f$ is non-empty,
\item
or the Julia set $J$ of $f$ is the whole Riemann sphere.
\end{itemize}
We deal with the former case in Sections \ref{MR}-\ref{PR}. The latter case is very similar, with differences described in Sect \ref{rat-c}.

Main results of the paper are Proposition \ref{propmain}, Theorem \ref{thm:main} (see also Corolarry \ref{coro:gen}) and
Theorem \ref{main-c}.

In the last Sect \ref{app}, as an application, we prove McMullen's theorem  (\cite{mcm}, Theorem 10.2) on the absence of invariant line fields for quadratic polynomials with unbranched complex bounds. Although the proof follows partly lines of \cite{mcm}, the  main difference is that we
don't use Lebesgue's almost continuity theorem which is a key point in McMullen's approach.

{\bf Acknowledgment.} The author would like to thank Benjamin Weiss for a helpful conversation.
Results of this work were reported at the conference "On geometric complexity of Julia sets V", Będlewo, Poland, July 21-26, 2024.

\section{Rational maps with non-empty Fatou set. Main Results}\label{MR}

In this section we assume that the Fatou set $F$ of $f$ is non-empty.
By Sullivan's No Wandering Domain theorem, any component of $F$ is (pre)periodic. Any periodic component
is either attracting or parabolic or rotation (Siegel or Hermann).

An important particular case is when $f$ is a polynomial.

If $f$ is not a polynomial,
after a conjugation by a Mobius transformation, we may assume, without loss of generality, that:

{\it point at $\infty$ lies in a periodic component $U_\infty$ of $F$, moreover, 
$a:=f(\infty)\neq \infty$ and $a$ is not a critical value of $f$.}

Let
$$P_a=\overline{\{f^n(a), n=1,2,...\}}.$$




%

Let $\cV=\{v_1,...,v_p\}$ be all pairwise different critical values of $f$
such that $v_i=f(c)\in\C$ for some $c\in\C$, $f'(c)=0$.

Let $T=T_f$ be the Ruelle-Thurston operatot of the map $f:\hat\C\to\hat\C$.
The following (resolvent) function plays a central role in the paper:
$$r_\psi=(1-T)^{-1}\psi.$$
  
{\it In the sequel, unless otherwise is explicitly stated, the function $\psi$ is always taken from the following list of test functions:
$$\cC=\{\frac{1}{x-v_1}, ..., \frac{1}{x-v_p}\},$$
if $f$ is a polynomial, and
$$\cC=\{\frac{1}{x-v_1}, ..., \frac{1}{x-v_p}, \frac{1}{x-a}, \frac{1}{(x-a)^2}, \frac{1}{(x-a)^3}\},$$
if $f$ is a rational function normalized as above.}

We show in Proposition \ref{psi} below that $r_\psi$ is a well-defined non-zero holomorphic function in an open set $\Omega$, for each $\psi\in\cC$, where:
$$\Omega=\C\setminus \{P\cup \cN\},$$
if $f$ is a polynomial, and
$$\Omega:=\C\setminus \{P\cup P_a\cup \cN\}$$
if $f$ is a rational functions (normalized as above)
where $\cN$ is the union of cycles of rotation domains of $f$.

Given $\psi\in \cC$,
let
$$\sigma^\psi(x)=\frac{|r_\psi(x)|}{r_\psi(x)}.$$
It is well-defined for each $x\in \Omega$ whenever $r_\psi(x)\neq 0$, i.e., outside of a discreet subset of $\Omega$ (at worst).





Before continuing let's show that $f$ has many fundamental sets (see Definition \ref{def:basic}) assuming that $P_j\neq J$:
\begin{lemma}\label{kk'}
Suppose that $area(J)>0$. Given a neighborhood $W$ of the set $P_J$ such that $area(\partial W)=0$,
let $K(W):=\{x\in W\cap J: f^n(x)\in W, n=1,2,...\}$. Assume that $area(J\setminus W)>0$.
Then $K(W)$ is a fundamental set.
\end{lemma}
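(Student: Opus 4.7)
The plan is to verify the two conditions of Definition \ref{def:basic} for $K := K(W)$, with the crucial input being the ergodic-theoretic fact (``almost immediate'' from \cite{ly} and \cite{mcm}) that, under the standing hypothesis $F \neq \emptyset$ of this section, Lebesgue-almost every $x \in J$ has $\omega(x) \subset P_J$. The proof then splits into checking (i) the set-theoretic conditions in Definition \ref{def:basic}(1) and (ii) the three measure-theoretic assertions in Definition \ref{def:basic}(2).

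Condition (1) should be immediate. Forward invariance of $P$ under $f$, combined with $P_J \subset W$, gives $P_J \subset K$; and if $x \in K$ then by the very definition of $K$ one has $f^{n+1}(x) \in W$ for every $n \geq 0$ and $f(x) \in J$, so $f(x) \in K$.

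The key step for (2) is to show that for a.e.\ $x \in J$ the forward orbit eventually enters and stays in $W$, i.e.\ some $f^N(x)$ lies in $K$. This will follow because the compact set $\omega(x) \subset P_J$ lies in the open neighborhood $W$, forcing $f^n(x) \in W$ for all sufficiently large $n$. Taking the smallest such $N \geq 0$, either $N = 0$ and $x \in K$, or $N \geq 1$ and $f^{N-1}(x) \in f^{-1}(K) \setminus K = K'$, placing $x \in f^{-(N-1)}(K')$. Together these yield
\[
J \;=\; K \cup \bigcup_{n \geq 0} f^{-n}(K')
\]
modulo a Lebesgue null set, which is the third clause of Definition \ref{def:basic}(2).

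Finally, the positivity statements $area(K) > 0$ and $area(K') > 0$ will follow from the standard fact that a non-constant rational map preserves the Lebesgue null class under both images and preimages: if $area(K) = 0$ then $\bigcup_n f^{-n}(K)$ would be null, contradicting $area(J) > 0$ via the previous step; and if $area(K') = 0$ then $\bigcup_n f^{-n}(K')$ would be null, which is incompatible with $area(J \setminus W) > 0$ together with the observation (from the same display above) that $J \setminus W$ is contained, up to a null set, in this union. The main obstacle I anticipate is pinning down the exact form of the citation to \cite{ly,mcm}: the statement ``$\omega(x) \subset P_J$ for a.e.\ $x \in J$'' must hold in the present generality, and the assumption $F \neq \emptyset$ (which rules out flexible Latt\`es maps) is precisely what excludes the exceptional cases of that dichotomy.
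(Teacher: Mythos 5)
Your proof is correct and follows essentially the same route as the paper: invoking the classical fact that $\omega(x)\subset P_J$ for a.e.\ $x\in J$, deducing from it the full-measure decomposition $J=K\cup\bigcup_{n\ge 0}f^{-n}(K')$ (mod null), and then getting both positivity statements by the null-preservation property of preimages. The only cosmetic difference is in the $area(K')>0$ step: you argue directly from $J\setminus W\subset\bigcup_n f^{-n}(K')$ (mod null), whereas the paper instead deduces that $K$ would have full measure and derives $J\subset\overline W$, but these are two sides of the same contradiction.
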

\begin{proof}

As $J\neq \bar\C$, by a classical argument \cite{ly}, \cite{mcm}, for a.e. $x\in J$, $\omega(x)\subset P_J$.
Therefore, $area(K(W))>0$ and $\cup_{j=0}^\infty f^{-j}(K(W))$ is a subset of $J$ of full measure.
Now, $area(K'(W))=0$ would imply that $f^{-1}(K(W))$ is equal to $K(W)$, up to a null set, hence,
the same would hold for $\cup_{j\ge 0} f^{-j}(K(W))$ and $K(W)$ where the former set has also full measure in $J$.
Thus $\overline{K(W)}$ has full measure in $J$ yielding $J\subset \overline{W}$, a contradiction with conditions of the statement.
\end{proof}

Here is an example of fundamental sets which is relevant to Corollary \ref{coro:quadr} of the Introduction.
\begin{exam}\label{ex:renorm}
For an infinitely renormalizable $f_c(z)=z^d+c$ and each level $n>0$ of a simple renormalization, let $J_n$ be the orbit of small Julia set $J_n(0)$ of this level $n>0$ that contains $0$. Note that $J'_n=f_c^{-1}(J_n)\setminus K_n$ coincides with the union of all rotations of $J_n$ by angles
$2\pi k/d$, $k=1,...,d-1$ with the set $J_n(0)$ being removed. Hence, assuming $area(J_c)>0$, then $area(J'_n)>0$. Moreover, $J_n$ is a fundamental set because the orbit of a.e. point of $J_c$ is absorbed by $J_n$, by Theorem 8.2 of \cite{mcm}.
\end{exam}


In order to state main results, fix a decreasing sequence $K_n$, $n=1,2,...$ of fundamental sets.
Let $K'_n=f^{-1}(K_n)\setminus K_n$, $K_\infty=\cap_{n\ge 0} K_n$, $K'_{\infty}=f^{-1}(K_\infty)\setminus K_\infty$
and $K_{\pm\infty}=\cup_{i=0}^\infty f^{-i}(K_\infty)$.

For a given $\psi\in\cC$ and every $n$, let $\sigma^\psi_n$ be the following line field:
\begin{equation}\label{sigman}
\sigma^\psi_n(x)=\left \{
                      \begin{aligned}
                       &\sigma^\psi(x), && \text{ if }  x\in K_n'\setminus K'_\infty \\
                       &(f^i)^*\sigma^\psi(x), && \text{ if }  f^i(x)\in K_n'\setminus K'_\infty \text{ for some } i>0 \\
                       &0, && \text{ otherwise }
  \end{aligned} \right.
\end{equation}

\begin{remark}\label{remarksigma}

1. Functions $\sigma^\psi_n$ belong to the closed unit ball of $L_\infty(J)$ which is a sequentially weak* compact set.

2. Assume that $J$ carries an invariant line field $\mu$ with $\supp(\mu)\cap K_{\pm\infty}=\emptyset$.
If, for some $y\in \supp(\mu)$ and some $i\ge 0$, $f^i(y)\in K'_n$, then
$$\frac{\sigma^\psi_n(y)}{\mu(y)}=\frac{(f^i)^*\sigma^\psi(y)}{(f^i)^*\mu(y)}=\frac{\sigma^\psi(f^i(y))}{\mu(f^i(y))}.$$
\end{remark}

\begin{prop}\label{propmain}
The following conditions are equivalent.
\begin{enumerate}
\item $f$ carries an invariant linefield which is supported on (a subset of) $J\setminus K_{\pm\infty}$,
\item there exists $\psi\in \cC$ such that
\begin{equation}\label{intk'n}
\liminf_{n>0}\int_{K_n'\setminus K'_\infty}|r_\psi(x)| |dx|^2>0.
\end{equation}
\end{enumerate}
Furthermore, for every $\psi\in\cC$, such that (\ref{intk'n}) holds,
any weak* limit point $\nu$ of the sequence $\{\sigma^\psi_n(x)\}_{n=1}^\infty$ is a nonzero fixed point of the operator $f^*$
with the norm $||\nu||_\infty\le 1$ whose support is contained in $J\setminus K_{\pm\infty}$.

\end{prop}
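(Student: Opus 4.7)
My plan is to reduce the equivalence to a single defect identity for $\sigma_n^\psi$, from which (2) $\Rightarrow$ (1) and the Furthermore drop out by weak$^*$ compactness; the direction (1) $\Rightarrow$ (2) then follows by pairing an invariant $\mu$ against $\sigma_n^\psi$ and invoking the nondegeneracy of the canonical family $\cC$.

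For the Furthermore and (2) $\Rightarrow$ (1), I would carry out a case analysis on the partition $J = K_n \sqcup \bigsqcup_{i \ge 0} f^{-i}(K_n')$. Since $f(K_n) \subset K_n$, the support of $\sigma_n^\psi$ is disjoint from $K_n$, so on $K_n' \setminus K'_\infty$ (where $x \in \supp(\sigma_n^\psi)$ but $f(x) \in K_n$ lies outside $\supp$) the operator $f^*$ produces zero; on every $f^{-i}(K_n' \setminus K'_\infty)$ with $i \ge 1$, the cocycle $(f^i)^* = f^* \circ (f^{i-1})^*$ gives exact invariance. The resulting defect identity is
\[
\sigma_n^\psi - f^* \sigma_n^\psi \;=\; \mathbf{1}_{K_n' \setminus K'_\infty} \cdot \sigma^\psi,
\]
with $L^1$ norm bounded by $\area(K_n' \setminus K'_\infty) \to 0$ (since $K_n \searrow K_\infty$ and $f^{-1}(K_n) \searrow f^{-1}(K_\infty)$ force $K_n' \setminus K'_\infty$ to shrink in Lebesgue measure to the empty set). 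Multiplying by $r_\psi$, integrating, and using the duality $\int g \cdot f^* \sigma \, |dx|^2 = \int \sigma \cdot T g \, |dx|^2$ together with $(1-T) r_\psi = \psi$ (Proposition \ref{psi}) yields the pairing identity
\[
\int_J \psi \, \sigma_n^\psi \, |dx|^2 \;=\; \int r_\psi (\sigma_n^\psi - f^* \sigma_n^\psi)\, |dx|^2 \;=\; \int_{K_n' \setminus K'_\infty} |r_\psi|\, |dx|^2.
\]
Extracting a weak$^*$ accumulation point $\nu$ of $\{\sigma_n^\psi\}$ from the unit ball of $L^\infty(J)$ (Remark \ref{remarksigma}), hypothesis (\ref{intk'n}) with the pairing identity forces $\int \psi \nu \ne 0$, so $\nu \ne 0$; the defect identity with $L^1$-boundedness of $T$ (from $\|Tg\|_{L^1} \le \|g\|_{L^1}$ by the change of variables $w = f(x)$) gives $\int g (\nu - f^* \nu)\, |dx|^2 = 0$ for every $g \in L^1(J)$, hence $f^* \nu = \nu$. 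The containment $\supp(\nu) \subset J \setminus K_{\pm\infty}$ passes from each $\sigma_n^\psi$ to the limit: if $x \in f^{-j}(K_\infty)$, then forward-invariance $f(K_\infty) \subset K_\infty$ traps the forward orbit of $x$ inside $K_\infty \subset K_n$ from time $j$ on, which is incompatible with $f^i(x) \in K_n' \setminus K'_\infty$ for any $i$.

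For (1) $\Rightarrow$ (2), given an invariant line field $\mu$ with $\supp(\mu) \subset J \setminus K_{\pm\infty}$, the idea is to pair $\sigma_n^\psi$ against $\bar\mu$. On $\supp(\mu)$ the invariance $\mu = (f^i)^* \mu$ and the unit modulus of the Beltrami factor $|(f^i)'|^2/((f^i)')^2$ give $\bar\mu \cdot \sigma_n^\psi = (\bar\mu \sigma^\psi) \circ f^i$ on $f^{-i}(K_n' \setminus K'_\infty) \cap \supp(\mu)$; the change of variables $w = f^i(x)$ collapses $\int \bar\mu \sigma_n^\psi$ to an integral over $K_n' \setminus K'_\infty$ of $\bar\mu \sigma^\psi$ against a $|f'|^{-2}$-type preimage density $\hat R(w) = \sum_{i \ge 0} \sum_{f^i(y) = w,\, y \in \supp(\mu)} |(f^i)'(y)|^{-2}$. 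Combining this with the nondegeneracy of the Cauchy-type moments $\int \psi \mu$ for $\psi \in \cC$ (Proposition \ref{fix}/Corollary \ref{c-neq} and their generalisations to the full canonical family), and passing to a common subsequence on the finite list $\cC$, one shows that no subsequence can make $\int \psi \sigma_n^\psi \to 0$ simultaneously for all $\psi \in \cC$, so by the pairing identity (\ref{intk'n}) must hold for at least one $\psi$.

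The main technical obstacle is that the test functions $\psi \in \cC$ (e.g.\ $\psi = 1/(x - v_j)$) and their resolvents $r_\psi$ are unbounded: $r_\psi$ is singular along the entire postcritical set $P$ and the rotation domains $\cN$. All such singularities lie inside $K_\infty \cup \cN$, hence are positively separated from $\supp(\mu) \subset J \setminus K_{\pm\infty}$ on compact subsets; this separation is what lets the formal manipulations---swapping $\sum_i$ with $\int$, invoking the $(f^*, T)$-duality for unbounded $\psi$ and $r_\psi$, and interpreting $\sigma^\psi = |r_\psi|/r_\psi$ as a well-defined unit-modulus line field almost everywhere on $J$---be made rigorous, as set up in Proposition \ref{psi} of the next section.
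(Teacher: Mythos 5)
Your treatment of (2)\,$\Rightarrow$\,(1) and the ``Furthermore'' is correct and is essentially the paper's argument, just repackaged as a ``defect identity'' $\sigma_n^\psi - f^*\sigma_n^\psi = \mathbf{1}_{K_n'\setminus K'_\infty}\,\sigma^\psi$; the paper makes the identical observation in the form $\sigma_n^* := f^*\sigma_n$ agrees with $\sigma_n$ off $K_n'\setminus K'_\infty$ and vanishes there, then pairs against $g\in L^1$ and uses $\area(K_n'\setminus K'_\infty)\to 0$. The pairing identity $\int_J \psi\,\sigma_n^\psi = \int_{K_n'\setminus K'_\infty}|r_\psi|$ is the same as the paper's, derived there via the locally uniformly convergent series $r_\psi = \sum_i T^i\psi$ rather than by applying $(1-T)$ to $r_\psi$; your route is formally cleaner but should be checked against the fact that $r_\psi\notin L^1(J)$ (the series form the paper uses sidesteps this by working term by term). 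Your support argument for $\nu$ is a small but correct addition that the paper leaves implicit.

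Your direction (1)\,$\Rightarrow$\,(2) has a genuine gap. You pair $\sigma_n^\psi$ against $\bar\mu$ and reduce $\int_J\bar\mu\,\sigma_n^\psi$ to an integral of $\bar\mu\,\sigma^\psi$ against a preimage density $\hat R$ over $K_n'\setminus K'_\infty$; that algebra is fine, but the quantity $\int\bar\mu\,\sigma_n^\psi$ has no visible relation to either $\int_J\psi\mu$ or to $\int_{K_n'\setminus K'_\infty}|r_\psi|$, and the sentence ``one shows that no subsequence can make $\int\psi\,\sigma_n^\psi\to 0$ simultaneously for all $\psi\in\cC$'' is asserted, not proved. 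What is actually needed is a direct lower bound on $\int_{K_n'\setminus K'_\infty}|r_\psi|$, and the paper gets it by pairing $r_\psi$ (not $\sigma_n^\psi$) against $\mu$: since $|\mu|\le 1$ one has pointwise $|r_\psi|\ge|r_\psi\,\mu|$, hence
\[
\int_{K_n'\setminus K'_\infty}|r_\psi|\ \ge\ \Bigl|\int_{K_n'\setminus K'_\infty} r_\psi\,\mu\Bigr|
= \Bigl|\sum_{i\ge 0}\int_{f^{-i}(K_n'\setminus K'_\infty)}\psi\,\mu\Bigr|
= \Bigl|\int_J \psi\,\mu - \int_{K_n}\psi\,\mu\Bigr|,
\]
using $(f^i)^*\mu=\mu$ and the change of variables. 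The last term tends to $\int_{K_\infty}\psi\,\mu = 0$ because $\supp\mu$ is disjoint from $K_\infty$, and Lemma \ref{psinonint} (equivalently Proposition \ref{fix}/Corollary \ref{c-neq}) supplies a $\psi\in\cC$ with $\int_J\psi\,\mu\neq 0$. This is the step your detour through $\bar\mu\,\sigma_n^\psi$ and $\hat R$ does not replace; nothing in that computation produces a uniform lower bound, and the ``common subsequence on the finite list $\cC$'' remark does not compensate for it.
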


Let us call an invariant line field $\mu$ on $J$ {\it ergodic} if its support $E_\mu$ is a completely invariant ergodic subset of $f:J\to J$
and $\mu$ is the unique invariant line field with support contained in $E_\mu$, up to a rotation, see Proposition \ref{fix}.

\begin{theorem}\label{thm:main}
(asymptotic extremality of the linefields $\sigma^\psi$)

Suppose that $\mu$ is an ergodic invariant line field with its support $E_\mu\subset J\setminus K_{\pm\infty}$.
Let us fix some $\psi\in\cC$ for which $\int_J \mu(x)\psi(x)|dx|^2\neq 0$ (by Proposition \ref{fix}, at leat one such $\psi$ always exists).
Replacing $\mu$ by $t\mu$ for some $|t|=1$,
one can assume that
$$\int_J \mu \psi >0.$$
For every $n$, let
$$\sigma^{\psi}_{E,n}=\sigma^\psi_n|_{E_\mu} \mbox{ on } E_\mu \mbox{ and } \sigma^{\psi}_{E,n}=0 \mbox{ on } \C\setminus E_\mu,$$
the restriction of the line field $\sigma^\psi_n$ to $E$.
Then:
\begin{enumerate}

\item the sequence $\{\sigma_{E,n}^{\psi}\}\in L_\infty(J)$ converges in the weak* topology to an invariant line field $\nu_0$ where $\nu_0=\mu$ on $E_\mu:=\supp(\mu)$ and $\nu_0=0$ on $\C\setminus E_\mu$,

\item the following limits exist and equal:
$$\lim_{n\to\infty}\int_{E_\mu\cap K_n'}|r_\psi(x)| |dx|^2=\lim_{n\to\infty}|\int_{K_n'} r_\psi(x)\mu(x) |dx|^2|
=|\int_J \mu(x) \psi(x)|dx|^2|>0.$$

\item
\begin{equation}\label{thm:inmeasure}
\sigma^{\psi}_{E,n}\to \mu \mbox{ as } n\to\infty \mbox{ in measure on } E_\mu.
\end{equation}

Equivalently, let
$$\Delta_n=(\mu-\sigma)\circ f^j \mbox{ on } f^{-j}(K_n'\cap E_\mu) \mbox{ for } j=0,1,2,...$$
Then
\begin{equation}\label{thm:delta}
\Delta_n\to 0 \mbox {  in measure on } E_\mu \mbox{ as } n\to\infty.
\end{equation}

\end{enumerate}
\end{theorem}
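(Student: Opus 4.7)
The plan is to reduce parts (1)--(3) to two pairing identities together with an $L^2$-convexity argument. Set $A_n:=K'_n\setminus K'_\infty$ and note $K'_n\cap E_\mu=A_n\cap E_\mu$, since $E_\mu\subset J\setminus K_{\pm\infty}$ implies $K'_\infty\cap E_\mu=\emptyset$. I would first establish the identities
$$
\int_J \sigma^{\psi}_{E,n}\,\psi = \int_{K'_n\cap E_\mu}|r_\psi|, \qquad
\int_J\mu\,\psi = \int_{K_n\cap E_\mu}\mu\,\psi + \int_{K'_n\cap E_\mu}\mu\,r_\psi.
$$
The first comes from writing $\sigma^\psi_n=\sum_{i\ge 0}(f^i)^*(\mathbf 1_{A_n}\sigma^\psi)$ on the disjoint preimage pieces of the fundamental-set partition of $J$, applying the iterated duality $\int g\cdot(f^i)^*\nu=\int T^i g\cdot\nu$, collapsing $\sum_i T^i\psi=r_\psi$ (Proposition~\ref{psi}), and using $r_\psi\,\sigma^\psi=|r_\psi|$; restriction to the completely invariant $E_\mu$ is harmless. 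The second identity is the analogous telescoping for $\mu$ using $(f^j)^*\mu=\mu$. Since $K_n\cap E_\mu\searrow K_\infty\cap E_\mu=\emptyset$, the first summand on the right tends to zero, yielding $\int\mu\psi=\lim_n\int_{K'_n\cap E_\mu}\mu\,r_\psi$.

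Combining these identities with $|\mu|\le 1$ gives
$$
0<\int_J\mu\psi = \lim_n\int_{K'_n\cap E_\mu}\mu\, r_\psi \le \liminf_n\int_{K'_n\cap E_\mu}|r_\psi| = \liminf_n\int_J\sigma^{\psi}_{E,n}\,\psi.
$$
By Proposition~\ref{propmain}, every weak$^*$-subsequential limit of $\{\sigma^\psi_n\}$ is an $f^*$-fixed line field on $J\setminus K_{\pm\infty}$ of $L^\infty$-norm $\le 1$; restricting to the completely invariant set $E_\mu$ preserves the fixed-point property, so every weak$^*$ limit of $\{\sigma^{\psi}_{E,n}\}$ is an $f^*$-fixed line field on $E_\mu$. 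Ergodicity of $\mu$ (Proposition~\ref{fix}) forces this limit to be $c\,\mu\,\mathbf 1_{E_\mu}$ with $|c|\le 1$. Pairing against $\psi\in L^1(J)$ then gives $c\int\mu\psi=\lim_n\int_J\sigma^{\psi}_{E,n}\psi=\lim_n\int_{K'_n\cap E_\mu}|r_\psi|$, a nonnegative real limit bounded below by $\int\mu\psi>0$; together with $|c|\le 1$ this forces $c=1$. Uniqueness of the weak$^*$ limit produces convergence of the whole sequence, and the three equalities in part~(2) fall out of the identities above.

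For part~(3) I upgrade weak$^*$ convergence to strong $L^2(E_\mu)$ convergence by matching norms. Holomorphy of $r_\psi$ on $\Omega\supset K'_n$ makes its zero set null, so $|\sigma^\psi_n|=\mathbf 1_{\bigcup_{i\ge 0}f^{-i}(A_n)}$ a.e., and the partition identity gives
$$
\|\sigma^{\psi}_{E,n}\|_{L^2}^2 = \operatorname{area}(E_\mu\setminus K_n) \longrightarrow \operatorname{area}(E_\mu) = \|\mu\,\mathbf 1_{E_\mu}\|_{L^2}^2.
$$
Weak $L^2$-convergence (inherited from weak$^*$ convergence on the finite-measure set $E_\mu$) combined with this norm convergence and uniform convexity of $L^2$ yields strong $L^2$ convergence, hence convergence in measure, which is \eqref{thm:inmeasure}. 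The equivalent form \eqref{thm:delta} follows from the pointwise identity $|\mu-\sigma^\psi_n|(y)=|(\mu-\sigma^\psi)\circ f^j|(y)=|\Delta_n|(y)$ a.e.\ on $f^{-j}(K'_n\cap E_\mu)$, valid because $\mu$ and $\sigma^\psi_n$ are both pullbacks by $f^j$ there and pullback preserves absolute values. The principal technical obstacle is justifying the interchange of the infinite series $\sum_i T^i\psi$ with integration in the two identities of the first step; this is expected to follow from the convergence properties of the resolvent $(1-T)^{-1}\psi$ on $K'_n\subset\Omega$ developed in Proposition~\ref{psi}.
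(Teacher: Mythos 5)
Your proof is correct, and parts (1) and (2) follow the paper's strategy almost verbatim: take a weak$^*$ subsequential limit, use Proposition~\ref{propmain} (adapted via complete invariance of $E_\mu$) and the ergodicity from Proposition~\ref{fix} to identify the limit as $c\,\mu\,\mathbf 1_{E_\mu}$ with $|c|\le 1$, then pair against $\psi$ through the resolvent identity $\int_J\sigma^\psi_{E,n}\psi=\int_{K'_n\cap E_\mu}|r_\psi|$ to force $c=1$. Your telescoping identity $\int_J\mu\psi=\int_{K_n\cap E_\mu}\mu\psi+\int_{K'_n\cap E_\mu}\mu\,r_\psi$ is exactly the paper's display chain in the proof of \eqref{basicineq}, and the three equalities of item~(2) do indeed fall out of it together with $c=1$.

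Where you genuinely diverge is in item~(3). The paper tests the already-established weak$^*$ convergence against the single test function $g=(1/\mu)\mathbf 1_F$ on a bounded $F\subset E_\mu$, obtaining $\int_F(1-b_n)\to 0$ for $b_n:=\sigma^\psi_{E,n}/\mu$, and then exploits $|b_n|\le 1$ (via $|1-b_n|^2\le 2\Re(1-b_n)$) to pass to convergence in measure. You instead compute the $L^2(E_\mu)$ norms explicitly, $\|\sigma^\psi_{E,n}\|_{L^2}^2=\operatorname{area}(E_\mu\setminus K_n)\to\operatorname{area}(E_\mu)=\|\mu\mathbf 1_{E_\mu}\|_{L^2}^2$, and couple this with weak $L^2$ convergence and uniform convexity to get strong $L^2$ convergence, hence convergence in measure. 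The two arguments are the parallelogram law in different clothing, so neither is more powerful, but your version is arguably a little cleaner: it avoids the paper's slight imprecision in asserting $|b_n|=1$ a.e.\ on $F$ (false on $F\cap K_n$, a set of vanishing but not zero measure), since you only need the norm limit, which already accounts for the disappearing support gap. Your reduction of \eqref{thm:delta} to \eqref{thm:inmeasure} via the pointwise identity $|\mu-\sigma^\psi_n|=|\Delta_n|$ on $f^{-j}(K'_n\cap E_\mu)$ is exactly the content of Remark~\ref{remarksigma}(2) and is stated correctly.
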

An almost immediate consequence of these result and their proofs is the following
\begin{coro}\label{coro:gen}
Given an invariant line field $\mu$ with its support $E_\mu\subset J\setminus \cup_{i\ge 0}f^{-i}(K_\infty)$,
let now $\psi\in L_1(J)$ be {\it ANY} function such that
\begin{equation}\label{mupsi}
\int\mu \psi>0.
\end{equation}
Then:
\begin{enumerate}\label{gen}
\item $$r_\psi=(1-T)^{-1}\psi$$
is defined a.e. on $J\setminus \{P\cup \cN\}$,
\item
$$\liminf_{n>0}\int_{K_n'\setminus K'_\infty}|r_\psi(x)| |dx|^2\ge \int \mu\psi |dx|^2>0.$$
In particular, for every $n$ large enough, the function
$$\sigma^\psi:=\frac{|\psi|}{\psi}$$
is well-defined on a subset of $K'_n$ of a positive measure,
\item the conclusions (1)-(3) of Theorem \ref{thm:main} hold for this $\psi$.
\end{enumerate}
\end{coro}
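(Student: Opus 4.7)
The plan is to lift the arguments of Proposition~\ref{propmain} and Theorem~\ref{thm:main} to arbitrary $\psi\in L_1(J)$ by replacing the particular construction of $r_\psi$ that was valid only for $\psi\in\cC$ with the bare $L_1$-contractivity of the Ruelle-Thurston operator $T$. Concretely, change of variables with Jacobian $|f'|^2$ yields
\[
\int_A |T^k\psi|\,|dx|^2 \;\le\; \int_A T^k|\psi|\,|dx|^2 \;=\; \int_{f^{-k}(A)} |\psi|\,|dx|^2
\]
for any measurable $A\subset J$ and any $\psi\in L_1(J)$. Taking $A=K_n'$ and using that $\{K_n,\,f^{-k}(K_n')\mid k\ge 0\}$ is a partition of $J$ mod null (by the fundamental-set property) gives $\sum_{k\ge 0}\int_{K_n'}|T^k\psi|\le\|\psi\|_1$. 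Hence the Neumann series $r_\psi:=\sum_{k\ge 0}T^k\psi$ converges absolutely in $L_1(K_n')$ for every $n$, defining $r_\psi$ a.e.\ on $\bigcup_n K_n'$; the functional equation $r_\psi=\psi+Tr_\psi$ then propagates the definition through the preimage partition, yielding part (1).

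For part (2), I will combine the invariance $\mu=f^{*j}\mu$ with the duality $\int g\,\mu = \int \mu\,T^j g$. Choosing $g=\psi\mathbf{1}_{f^{-j}(K_n')}$, one checks directly that $T^j g=\mathbf{1}_{K_n'}T^j\psi$, so $\int_{f^{-j}(K_n')}\psi\mu = \int_{K_n'}\mu\,T^j\psi$. Summing over $j\ge 0$ (legitimate by the absolute convergence above) and using the partition of $J$ produces the key identity
\[
\int_J \psi\,\mu \;=\; \int_{K_n}\psi\,\mu \;+\; \int_{K_n'}\mu\,r_\psi.
\]
Since $E_\mu\cap K_\infty=\emptyset$ (from $E_\mu\subset J\setminus K_{\pm\infty}$) and $|\psi\mu\mathbf{1}_{K_n}|\le|\psi|$, dominated convergence forces $\int_{K_n}\psi\mu\to 0$. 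Since $\mu$ vanishes on $K_\infty'\subset K_{\pm\infty}$ and $|\mu|\le 1$,
\[
\int_{K_n'\setminus K_\infty'}|r_\psi| \;\ge\; \Bigl|\int_{K_n'}\mu\,r_\psi\Bigr| \;\xrightarrow[n\to\infty]{}\; \int_J\mu\psi \;>\;0,
\]
which is (2); in particular $\sigma^\psi=|r_\psi|/r_\psi$ is defined on a positive-measure subset of $K_n'$ for all large $n$.

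For part (3), the proofs of Proposition~\ref{propmain} and Theorem~\ref{thm:main} go through essentially verbatim, since they rest only on (a) an $L_1$-convergent series for $r_\psi$ on each $K_n'$, (b) the integration identity just established, and (c) ergodic uniqueness of $\mu$ on $E_\mu$. The algebraic identity $\sigma_{E,n}^\psi-f^*\sigma_{E,n}^\psi=\sigma^\psi\mathbf{1}_{K_n'\cap E_\mu}$ forces any weak-$*$ limit of $\sigma_{E,n}^\psi$ to be $f^*$-invariant (because $K_\infty'\cap E_\mu=\emptyset$), hence equal to $\mu$ by ergodic uniqueness. Convergence in measure then follows from the standard $L_2$-parallelogram argument, using $|\sigma_n^\psi|\le|\mu|=1$ on $E_\mu$ together with $\int_{E_\mu}|\sigma_{E,n}^\psi|^2=\area(E_\mu\setminus K_n)\to\area(E_\mu)$.

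I expect the only genuine obstacle to be bookkeeping: making the global definition of $r_\psi$ unambiguous on $J\setminus(P\cup\cN)$ via the functional equation, and confirming that the $L_1$-convergence $\mathbf{1}_{K_n'}\to\mathbf{1}_{K_\infty'}$ (which itself follows from monotone convergence applied to $\mathbf{1}_{f^{-1}(K_n)}-\mathbf{1}_{K_n}$) is strong enough to validate the weak-$*$ passage. Both are quantitative consequences of the estimate in step~2, so no analytical input beyond the $L_1$-contractivity of $T$ is required.
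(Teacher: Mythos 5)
Your argument is correct and follows the same strategy as the paper: the author's proof of Corollary~\ref{coro:gen} is a one-line pointer back to the proofs of Proposition~\ref{psi}(1), Proposition~\ref{propmain} (first implication), and Theorem~\ref{thm:main}, and your write-up simply makes those steps explicit (change of variables and the partition $\{K_n,\,f^{-k}(K'_n)\}$ to get $L_1$-summability of $\sum T^k\psi$ on $K_n'$, the duality identity $\int_J\psi\mu=\int_{K_n}\psi\mu+\int_{K_n'}\mu\,r_\psi$, dominated convergence to kill $\int_{K_n}\psi\mu$, and the weak* compactness plus ergodic uniqueness for item (3)). One small remark worth keeping in mind: the paper's invocation of Proposition~\ref{psi}(1) is slightly misleading for a general $\psi\in L_1(J)$, since the Koebe-distortion step there relies on $\psi$ being one of the holomorphic kernels in $\cC$; what actually survives for arbitrary $\psi$ is exactly the change-of-variables $L_1$-estimate you isolate, which yields a.e.\ absolute convergence on $J\setminus K_\infty$ (rather than locally uniform convergence), and this is all that items (2)--(3) require.
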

\begin{proof} Item (1) follows by repeating the proof of part (1) of Propositions \ref{psi}, \ref{psi-c}.
In turn, proofs of items (2), (3) repeat almost literally the ones of the first implication of Proposition \ref{propmain}
and of Theorem \ref{thm:main} accordingly.
\end{proof}
\begin{remark}\label{psivspsi}
Our choice of $\psi$ to be taken from the finite list $\cC$ has important advantages: first, one of $\psi\in\cC$ always satisfies the condition
(\ref{mupsi}), secondly, the corresponding resolvent function $r_\psi$ is holomorphic (which is crucial, for example, in Sect \ref{app}).
\end{remark}

\section{Parametrizing invariant line fields}\label{AS}
Here we prove a few preparatory results. Some of them can be found elsewhere (with different formulation or proof, or in less generality,
see comments below).

Recall that $\cV=\{v_1,...,v_p\}$ are all pairwise different critical values of $f$ which lie in the complex plane and are images
of critical points that are also in the complex plane.  
\begin{lemma}\label{T}
For any $x, z\in\C$ such that $x\neq a$, $x\neq f(z)$, $f'(z)\neq 0$ and $x\notin \cV$:
\begin{equation}\label{eqT}
T\frac{1}{z-x}=\frac{1}{f'(z)}\frac{1}{f(z)-x}+\sum_{j=1}^p\frac{L_j(z)}{x-v_j}-I(z,x),
\end{equation}
for some functions $L_j(z)$ and where:

$I(z,x)=0$ if $f$ is a polynomial; if $f$ is not a polynomial, then
\begin{equation}\label{I}
I(z,x)=\frac{p_0}{(a-x)^3}+\frac{p_1(z)}{(a-x)^2}+
\frac{p_{2}(z)}{(a-x)},
\end{equation}
where $p_i(z)$ are polynomials in $z$ with constant coefficients of degree at most $i=0,1,2$
\end{lemma}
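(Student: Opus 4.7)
The plan is to read $T\frac{1}{z-x}$ as the rational function of $x$ obtained by applying the Ruelle--Thurston operator $T$ to $h^z\colon y\mapsto \frac{1}{z-y}$ (with $z$ a parameter), namely
\[
(Th^z)(x)=\sum_{y:\,f(y)=x}\frac{1}{(z-y)(f'(y))^2}.
\]
Since this sum is symmetric in the $d$ algebraic preimages $y_1(x),\dots,y_d(x)$, it is a rational function of $x$ for every fixed $z$. The lemma is therefore a partial-fraction identity for this rational function, so my strategy will be to (i) enumerate the finite poles with their principal parts, (ii) analyse the behaviour at $x=\infty$, and then invoke the fact that a rational function of $x$ with no poles in $\hat\C$ that vanishes at $\infty$ must be identically zero.

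I will show that the only finite poles of $(Th^z)(x)$ are at $x=f(z)$ and at the critical values $v_j$. At $x=f(z)$ a unique preimage $y$ tends to $z$; the local inversion $y-z=(x-f(z))/f'(z)+O((x-f(z))^2)$ together with $\frac{1}{(f'(y))^2}=\frac{1}{(f'(z))^2}+O(y-z)$ gives principal part exactly $\frac{1}{f'(z)(f(z)-x)}$, which matches the first term on the right. At $x=v_j$ the preimages coalesce at each critical point $c\in f^{-1}(v_j)$; writing $f(y)=v_j+a_{k+1}(y-c)^{k+1}+\cdots$ for a critical point of local degree $k+1$, the merging branches are $y_m=c+\omega^m\bigl(\tfrac{x-v_j}{a_{k+1}}\bigr)^{1/(k+1)}+O((x-v_j)^{2/(k+1)})$ with $\omega=e^{2\pi i/(k+1)}$ and $m=0,\dots,k$. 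Expanding $\frac{1}{z-y_m}$ and $\frac{1}{(f'(y_m))^2}$ in powers of $(x-v_j)^{1/(k+1)}$, the $m$-dependence enters through factors $\omega^{m(\ell-2k)}$; summing over $m$ kills every term for which $(k+1)\nmid(\ell-2k)$, and among the singular exponents only $\ell=k-1$ survives, contributing a simple pole whose residue (summed over critical points lying above $v_j$) is a rational function $L_j(z)$ holomorphic off $f^{-1}(v_j)$. Thus the total principal part at $v_j$ is $\frac{L_j(z)}{x-v_j}$.

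For a polynomial $f$ of degree $d\ge 2$ every preimage satisfies $y_i(x)\sim\alpha_i x^{1/d}$ as $x\to\infty$, whence $\frac{1}{(z-y_i)(f'(y_i))^2}=O(|x|^{-(2d-1)/d})$ and $(Th^z)(x)\to 0$. The other two terms of the identity also vanish at $\infty$, so their difference with $(Th^z)(x)$ is a rational function of $x$ with no poles in $\hat\C$ vanishing at $\infty$, hence identically zero, giving $I\equiv 0$. For a non-polynomial $f$ the preimage $y=\infty$ of $a$ produces one additional finite pole, at $x=a$. Using the coordinate $u=1/y$ and the Taylor expansion $f(1/u)=a+b_1u+b_2u^2+\cdots$ (with $b_1\neq 0$ since $a\notin\CV$), local inversion gives $y=b_1/(x-a)+O(1)$ near $x=a$, while
\[
\frac{1}{(f'(y))^2}=\frac{y^4}{b_1^2}+O(y^3),\qquad \frac{1}{z-y}=-\sum_{\ell\ge 0}\frac{z^\ell}{y^{\ell+1}}.
\]
Multiplying these expansions, the coefficient of $y^{3-i}$ in the product is a polynomial in $z$ of degree at most $i$, because the only source of $z$-dependence is $\frac{1}{z-y}$, whose coefficient of $1/y^{\ell+1}$ is $-z^\ell$. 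Substituting $y=b_1/(x-a)+O(1)$ and collecting the terms of negative order in $x-a$ yields a principal part of exactly the form $-I(z,x)$ as stated, with the claimed degree bounds $\deg_z p_i\le i$. All other preimages remain bounded at $x=a$, so subtracting the three principal parts again leaves a rational function with no poles in $\hat\C$ vanishing at $\infty$, hence zero.

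The hard part will be the Puiseux bookkeeping in step (ii) at critical points of higher multiplicity --- verifying rigorously that the root-of-unity cancellation $\sum_m\omega^{m(\ell-2k)}=0$ annihilates every fractional-exponent singularity and leaves only a simple pole, and that the surviving residue assembles into a rational function of $z$ --- together with the parallel Laurent analysis at $x=a$ that pins down the degree bounds on $p_0,p_1,p_2$.
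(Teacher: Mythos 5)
Your argument is correct in outline and takes a genuinely different route from the paper. You treat $T\frac{1}{z-x}$ as a rational function of $x$ (for fixed $z$), enumerate its poles in $x$, compute principal parts by local expansion (Taylor inversion at $x=f(z)$; Puiseux expansion and root-of-unity cancellation at $x=v_j$; Laurent expansion at $x=a$ via the coordinate $u=1/y$), check vanishing at $x=\infty$, and conclude via the no-poles-and-vanishing-at-$\infty$ principle. The paper instead computes the single contour integral
$$I(z,x)=\frac{1}{2\pi i}\int_{|w|=R}\frac{dw}{f'(w)\,(f(w)-x)\,(w-z)}$$
in two ways: once by summing residues of the integrand in $w$ over the interior (at $w=z$, at the preimages $y$ of $x$, and at the critical points $c$ of $f$), which produces $-T\frac{1}{z-x}+\frac{1}{f'(z)(f(z)-x)}+\sum_j L_j(z)/(x-v_j)$; and once via the residue at $w=\infty$, which gives $0$ in the polynomial case and the cubic-pole expression (\ref{I}) in the rational case. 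The key payoff of the paper's $w$-variable residue computation is that a critical point $c$ of multiplicity $m_c$ is just an ordinary pole of order $m_c$ of $1/f'(w)$, so the residue is an elementary derivative computation and automatically yields a \emph{simple} pole $L_c(z)/(x-f(c))$ in $x$, with no branching to track. Your approach must instead sum over the $m_c+1$ merging Puiseux branches of $y(x)$ near $c$ and verify that the roots-of-unity cancellation $\sum_m\omega^{m(\ell-2k)}$ kills every fractional-power singularity and leaves only $\ell=k-1$, i.e.\ only the simple pole; your congruence $\ell\equiv k-1\pmod{k+1}$ together with the range $0\le\ell<2k$ does pin this down, but it is exactly the bookkeeping the contour integral avoids. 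Both routes terminate in the same partial-fraction identity and the same degree bounds $\deg_z p_i\le i$; yours is more hands-on and elementary, the paper's is shorter and reuses the residue computation from \cite{Lpert}. One small caution for your write-up: you should state explicitly that $T\frac{1}{z-x}$ is rational in $x$ because it is a symmetric rational function of the $d$ roots $y_1(x),\dots,y_d(x)$ of the polynomial equation $f(y)=x$ (numerator minus $x$ times denominator), which is what licenses the whole partial-fraction strategy, and that the additional hypotheses in the normalization ($a\notin\cV$, hence $\infty$ is neither a critical point nor a pole of $f$) are what guarantee that the pole at $x=a$ comes from a single simple preimage at $\infty$ and that the remaining preimages, which flock to the finite poles of $f$ as $x\to\infty$, contribute nothing at infinity.
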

\begin{remark}\label{l-rem}
In fact, $L_j(z)$ is a function which is meromorphic on the Riemann sphere and having poles precisely at
points $c\in\C$ such that $f'(c)=0$ and $v_j=f(c)\in\C$, see Lemma 2.1 of \cite{Lpert} where the normalization of $f$ at $\infty$ is different
but this part in the proof is local.
\end{remark}
\begin{proof}
For a fixed $z, x\in\C$ and $R>0$ big enough, let
$$I(z,x)=\frac{1}{2\pi i}\int_{|w|=R}\frac{1}{f'(w)(f(w)-x)(w-z)}dw.$$
$I(z,x)$ is independent of $R$ provided $R$ is big enough. On the one hand, for $x,z$ as in the conditions,
$$I(x,z)=-T\frac{1}{z-x}+\frac{1}{f'(z)}\frac{1}{f(z)-x}+\sum_{c\in\C: f'(c)=0, f(c)\in\C} I_c(z,x)$$
where the operator $T$ acts on the variable $x$ while
$$I_c(z,x)=\frac{1}{2\pi i}\int_{|w-c|=\epsilon}\frac{1}{f'(w)(f(w)-x)(w-z)}dw,$$
for some small enough $\epsilon>0$.
Repeating a calculation in the proof of Lemma 2.1 of \cite{Lpert}, one gets:
$I_c(z,x)=L_c(z)/(x-f(c))$. We put
$$L_j(z)=\sum_{c\in\C: f'(c)=0, v_j=f(c)}L_c(z).$$
On the other hand, calculating the integral $I(z.x)$ at $\infty$, we obtain that $I(z,x)=0$ if $f$ is a polynomial
while, if $f$ is a rational function with the chosen normalization such that $f(\infty)=a\neq \infty$,
we get (\ref{I}) (after a somewhat tedious calculation). Note that $f(w)=a+\frac{A_1}{w}+O(\frac{1}{w^2})$, as $w\to\infty$, where $A_1\neq 0$ 
because $\infty$ is not a critical point of $f$.
\end{proof}

Let

$$|T|(g)(x):=\sum_{y: f(y)=x} \frac{|g(y)|}{|f'(y)|^2}$$
be the associated to $T$ absolute value operator.

Given a function $\psi\in\cC$, recall that
$$r_\psi=(1-T)^{-1}\psi.$$
Let also
$$|r|_{\psi}=\sum_{i=0}^\infty |T|^i (\psi).$$




\begin{prop}\label{psi}


(1)
For each $\psi\in\cC$, the series
$|r|_{\psi}=\sum_{i=0}^\infty (|T|^i \psi)(x)$
converges locally uniformly to a continuous function in
the open set $\Omega$.

(2) for every $\psi\in\cC$, the function $r_\psi$ is well-defined and holomorphic in $\Omega$, and
$$r_\psi(x)=\sum_{i=0}^\infty (T^i \psi)(x)$$
where the series converges locally uniformly in $\Omega$.

\end{prop}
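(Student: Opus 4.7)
The plan is to iterate the identity of Lemma \ref{T}. Take first $\psi(x) = 1/(x - v)$ with $v \in \cV$ (the case $v = a$ and the higher powers $1/(x-a)^k$ follow symmetrically using the polynomials $p_i$ appearing in the remainder $I(z,x)$). Setting $z = v$ in Lemma \ref{T}, or descending one step along the orbit when $f'(v) = 0$, yields
\[
T\psi(x) \;=\; \frac{1}{f'(v)} \cdot \frac{1}{x - f(v)} \;-\; \sum_{j=1}^{p} \frac{L_j(v)}{x - v_j} \;+\; I(v, x),
\]
so $T$ shifts the principal pole of $\psi$ forward along the orbit of $v$, modulo fixed-pole corrections supported on $\cV \cup \{a\}$. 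Inductively, $T^i \psi$ is a finite linear combination of test functions of the same form with poles at $f^k(v')$, $0 \le k \le i$, $v' \in \cV \cup \{a\}$, with coefficients built out of products $\prod_\ell f'(f^\ell(v'))^{-1}$ and the bookkeeping factors from $L_j$ and $p_i$.

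For a compact $K \subset \Omega$, every pole $f^k(v')$ of $T^i \psi$ lies in $P \cup P_a$, hence at positive distance from $K$, so each factor $1/|x - f^k(v')|$ is uniformly bounded on $K$. Locally uniform convergence of $\sum_i |T|^i \psi$ therefore reduces to summability of the corresponding weights: since preimages of $x \in \Omega$ under $f$ remain in $\Omega$ (as $f^{-1}(\Omega) \subset \Omega$ by forward invariance of $P \cup P_a \cup \cN$), every inverse branch of $f$ defined on $\Omega$ is a strict contraction in the Poincar\'e metric of the hyperbolic domain $\Omega$, and iterating produces the required exponential decay of $\sum_{y \in f^{-i}(x)} |(f^i)'(y)|^{-2}$. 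Part~(2) then follows routinely: the bound $|T^i \psi(x)| \le (|T|^i |\psi|)(x)$ transfers absolute convergence to the Neumann series $\sum T^i \psi$; each $T^i \psi$ is holomorphic on $\Omega$ since the relevant inverse branches avoid critical points; and telescoping, combined with $T^{N+1}\psi \to 0$ locally uniformly on $\Omega$, gives $(1 - T) r_\psi = \psi$, justifying the notation $r_\psi = (1-T)^{-1}\psi$.

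The main obstacle I expect is the summability estimate in part~(1). Although preimages of $x \in \Omega$ stay in $\Omega$, the Poincar\'e contraction of $f^{-1} : \Omega \to \Omega$ is only strict, not uniform up to $\partial \Omega \supset P \cup P_a \cup \cN$, so the exponential decay must be combined carefully with the explicit recursion produced by iterating Lemma \ref{T}, and likely with the summability estimates along critical orbits developed in \cite{LPS}, in order to absorb the correction terms $L_j$ and $p_i$ accumulating at each iteration.
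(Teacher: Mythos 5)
Your approach is genuinely different from the paper's, and it has a gap that I don't think can be repaired as stated. The paper does not iterate Lemma~\ref{T} at all in the proof of Proposition~\ref{psi}; instead it uses the elementary change-of-variables identity
$$\int_{B}\Bigl(\sum_{i=0}^{N}(|T|^i\psi)(x)\Bigr)|dx|^2=\int_{\cup_{i=0}^{N}f^{-i}(B)}|\psi(x)|\,|dx|^2,$$
which is uniformly bounded by $\|\psi\|_{L^1(\C)}<\infty$ provided the preimages $f^{-i}(B)$, $i\ge 0$, are pairwise disjoint. Such disjointness is obtained by taking $B=B(x_0,2\rho)\subset\Omega$ centered at a Fatou point $x_0$ in an attracting or parabolic basin (possible after the case reduction in the proof). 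This gives an $L^1$-bound on the partial sums, which is then upgraded to a locally uniform pointwise bound via the Koebe distortion theorem applied to the univalent inverse branches of $f^i$ on $B$ (univalence holds because $B$ misses the postcritical set). Julia points are handled by locating a nearby Fatou point inside $B(x_0,\rho)$. No hyperbolic-metric contraction, no bookkeeping of poles, and no LPS-type estimates are needed.

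The central gap in your argument is the claimed exponential decay of $\sum_{y\in f^{-i}(x)}|(f^i)'(y)|^{-2}$. This is not true in general and does not follow from the Poincar\'e contraction of inverse branches on the hyperbolic domain $\Omega$: that contraction controls the hyperbolic derivative, not the Euclidean one, and since the preimages $f^{-i}(x)$ accumulate on $\partial\Omega\supset P$, the conversion factor degenerates. What one actually has is only the summability $\sum_i \bigl(\sum_{y\in f^{-i}(x)}|(f^i)'(y)|^{-2}\bigr)<\infty$ for $x$ with pairwise disjoint ball-preimages, which is precisely the $L^1$ argument above — not a geometric decay rate. Your other route, iterating Lemma~\ref{T} to expand $T^i\psi$ into a growing sum of test functions with poles on the postcritical set, runs into uncontrolled combinatorial growth of the coefficients (each application of $T$ spawns $p+O(1)$ new correction terms, so $T^i\psi$ has exponentially many summands), and you offer no mechanism to show that this sum of ``bookkeeping factors'' is bounded. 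So the argument is circular: the needed bound on these coefficients is essentially equivalent to the conclusion you are trying to prove. The paper sidesteps all of this by never expanding $T^i\psi$ term-by-term.
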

\begin{proof}


First, we prove that: 
\begin{equation}\label{locunif}
\sum_{i=0}^\infty |T|^i (\psi) \mbox{ converges locally uniformly in } \Omega.
\end{equation}
{\bf Case 1}. Let $x_0\in\Omega\cap F$. It follows from our normalization of $f$ and since $x_0$ lies either in an attracting basin without the corresponding attracting periodic point,
or in a parabolic basin, or in a preimage of such basins that, for some $\rho>0$,
all components of the set $f^{-i}(B(x_0,2\rho))$, $i=0,1,...$, are pairwise disjoint and uniformly away from $\infty$.
Therefore, by change of variables,
for $\psi(x)=(x-v_j)^{-1}$,
\begin{equation}\label{rpsiconv}
\int_{B(x_0, 2\rho)}\sum_{i=0}^\infty (|T|^i \psi)(x)|dx|^2=\int_{\cup_i f^{-i}(B(x_0, 2\rho))}|\psi(x)||dx|^2:=S<\infty.
\end{equation}
On the other hand, 
$$|T|^i \psi)(x)=\sum_{y: f^i(y)=x}\frac{|\psi(y)|}{|(f^i)'(y)|^2}.$$
Sine $B(x_0, 2\rho)$ is disjoint with the postcritical set $P$ so that every branch $g:=f^{-i}:B(x_0,2\rho)\to\C$
is well-defined and univalent,
by Koebe distortion theorem, there are some absolute constants $C_1, C_2>0$ such that
for all $x_1,x_2\in B(x_0,\rho)$, we have:
\begin{equation}\label{koebe}
\frac{|g'(x_1)|}{|g'(x_2)|}\le C_1 \mbox{ and } \frac{\dist(g(x_1), z)}{\dist(g(x_2), z))}\le C_2
\mbox{ for all } x_1,x_2\in B(x_0,\rho), z\notin g(B(x_0, 2\rho)).
\end{equation}
($\dist$ means the Euclidean distance).
By (\ref{koebe}), there exists an absolute $L>0$ such that $|\psi(g(x))|/|\psi(g(x_0))|\le L$ for all $x\in B(x_0, \rho)$, any
branch $g:=f^{-i}:B(x_0,2\rho)\to\C$ and any $\psi\in\cC$ (in fact, $L=C_2$ if $\psi(x)=1/(x-z)$, for $z\in V$ or $z=a$, and $L=C_2^3$ for
$\psi(x)=1/(x-a)^i$, $i=2,3$).
Again using (\ref{koebe}), we get, for $N_1<N_2$ and all $x, y\in B(x_0,\rho)$:
\begin{equation}\label{koebeunif}
\frac{\sum_{i=N_1}^{N_2} (|T|^i \psi)(y)}{\sum_{i=N_1}^{N_2} (|T|^i \psi)(x)}\le L C_1^2.
\end{equation}
Using that, we conclude from (\ref{rpsiconv}) that, for every $y\in B(x_0,\rho)$,
$$|r|_{\psi}(y)\le \frac{L C_1^2}{\pi\rho^2} S,$$
which, along with (\ref{koebeunif}) implies (\ref{locunif}) in $\Omega\setminus J$. 


{\bf Case 2}. Now, let $x_0\in J\cap \Omega$ and $\rho>0$ be so that $B(x_0, 2\rho)\subset \Omega$.
Let us show that there is $x_1\in B(x_0,\rho)\cap\Omega\setminus J$. Indeed, otherwise
$B(x_0,\rho)\setminus J$ would consist only points of rotation domains of $f$. As $f^M(B(x_0,\rho))\supset J$ for some $M$,
then the whole Fatou set $F$ would consist only rotation domains, a contradiction as there exist preimages
of rotation domains other than themselves. 

Thus one can pick $x_1\in B(x_0, \rho)\cap\Omega\setminus J$ for which we know already that
$|r|_\psi(x_1)$ exists. Then we proceed as in the proof of Case 1 to get (\ref{locunif}) near $x_0$. Together with Case 1, this proves (\ref{locunif}).

Since partial sums of
$|r|_{\psi}$ are continuous functions in $\Omega$ and
partial sums of $\sum_i T^i\psi$ are holomorphic functions in $\Omega$, (1)-(2) follow.

\end{proof}

Let $Fix(f^*)$ be a subspace in $L_\infty(J)$ of all fixed points of the operator $f^*$.

Denote, for every $\nu\in L_\infty(J)$,
$$\hat\nu(z)=\int_{J}\frac{\nu(x)}{z-x}|dx|^2, \hat\nu'(z)=\int_{J}\frac{\nu(x)}{(z-x)^2}|dx|^2, \hat\nu''(z)=\int_{J}\frac{\nu(x)}{(z-x)^3}|dx|^2,$$
where the first integral is the Cauchy transform of the measure $\nu(x)|dx|^2$ supported on a compact subset of $\C$ (so it always exists),
and the latter two integrals exist whenever $z\notin J$.

\begin{prop}\label{fix}


(1)
The following map $\cF: Fix(f^*)\to\C^{p+3}$:
$$\nu\in Fix(f^*)\mapsto F(\nu)=(\hat\nu(v_1),...,\hat\nu(v_p),\hat\nu(a),\hat\nu'(a),\hat\nu''(a))$$
is linear and injective.

If $f$ is a polynomial, we let here $\hat\nu(a)=\hat\nu'(a)=\hat\nu''(a)=0$.


(2) $Fix(f^*)$ is either trivial or a finite direct sum of $1$-dimensional spaces
spanned by ergodic invariant line fields:
there are completely invariant ergodic subsets $E_1,...,E_m$ of $J(f)$ with pairwise disjoint supports, such that each $E_i$ carries a unique (up to rotation) invariant linefield $\mu_i$ and every $\nu\in Fix(f^*)$ admits a unique representation
$\nu=\sum_{i=1}^m t_i\mu_i$ for some $(t_1,...,t_m)\in\C^m$.
\end{prop}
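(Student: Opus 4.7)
\textbf{Plan for Proposition~\ref{fix}.}

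Linearity of $\cF$ is immediate from the definitions of $\hat\nu,\hat\nu',\hat\nu''$ as linear integrals, so the content of (1) is injectivity. Suppose $\nu\in Fix(f^*)$ satisfies $\cF(\nu)=0$. My first step is to derive a functional equation for $\hat\nu$. The duality $\int_J g\cdot f^*\nu=\int_J \nu\cdot Tg$ combined with $\nu=f^*\nu$ gives $\int_J g\nu=\int_J \nu Tg$ for every $g\in L^1(J)$. Taking $g(x)=1/(z-x)$ with $z\notin J$, expanding $Tg$ by Lemma~\ref{T}, and integrating termwise against $\nu$ (using $\int_J \nu(x)/(x-v_j)|dx|^2=-\hat\nu(v_j)$ and $\int_J \nu(x)/(a-x)^k|dx|^2=\hat\nu,\hat\nu',\hat\nu''$ at $a$ for $k=1,2,3$ respectively), one obtains
$$\hat\nu(z)=\frac{1}{f'(z)}\hat\nu(f(z))-\sum_{j=1}^p L_j(z)\hat\nu(v_j)-p_0\hat\nu''(a)-p_1(z)\hat\nu'(a)-p_2(z)\hat\nu(a).$$
The hypothesis $\cF(\nu)=0$ collapses every term on the right except the first, leaving the functional equation $\hat\nu(f(z))=f'(z)\hat\nu(z)$ off a discrete exceptional set; call this (FE).

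Next I use (FE) to propagate vanishing. Iterating along the forward orbit of any critical value gives $\hat\nu(f^n(v_j))=(f^n)'(v_j)\hat\nu(v_j)=0$ for all $n$, so $\hat\nu$ vanishes on the whole postcritical set $P$. In the polynomial case, iterate (FE) in the basin $U_\infty$ of infinity using the B\"ottcher coordinate $\psi$: $|(f^n)'(z)|\sim d^n|\psi(z)|^{d^n-1}$ grows doubly-exponentially while $|\hat\nu(f^n(z))|=O(|\psi(z)|^{-d^n})$ decays doubly-exponentially, and (FE) then forces $\hat\nu\equiv 0$ on $U_\infty$. In the general rational case, the triple zero $\hat\nu(a)=\hat\nu'(a)=\hat\nu''(a)=0$ propagates by (FE) to triple zeros at every iterate $f^n(a)$, which accumulate either inside $U_\infty$ (on an attracting/parabolic cycle, or on an invariant real-analytic circle in a Siegel/Herman ring), forcing $\hat\nu\equiv 0$ on $U_\infty$ by the identity theorem. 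Combined with $\partial U\subset \overline{P}$ for every rotation-domain cycle and the fact that each attracting/parabolic basin is accumulated by a critical orbit, the maximum principle extends $\hat\nu\equiv 0$ to every Fatou component.

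The continuity of $\hat\nu$ on $\C$ (as the Cauchy transform of a bounded compactly supported function) and the density of $F$ in $\C$ (since $J$ has empty interior when $F\ne\emptyset$) then give $\hat\nu\equiv 0$ on $\overline{F}=\C$, and the distributional identity $\overline\partial\hat\nu=\pi\nu$ forces $\nu=0$, proving injectivity. For part (2), the pointwise equation $f^*\nu=\nu$ immediately gives $|\nu\circ f|=|\nu|$, so $|\nu|$ is $f$-invariant and constant on every ergodic component of $f:J\to J$. On any ergodic invariant subset $E$ the ratio of two invariant line fields is a modulus-one $f$-invariant measurable function, hence constant by ergodicity; so any invariant line field supported on $E$ is unique up to rotation. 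Writing $\nu=\sum_i t_i\mu_i$ over the distinct ergodic components $E_1,\dots,E_m$ that support such a line field decomposes $Fix(f^*)$ as a direct sum of one-dimensional spaces, and the injectivity of $\cF$ proved in (1) bounds $m\le p+3<\infty$.

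The main obstacle I anticipate is the parabolic case: in a parabolic basin the orbit of $\infty$ accumulates only at the parabolic periodic point on $\partial U_\infty\subset J$, so the zeros of $\hat\nu$ produced by iterating (FE) at $a$ accumulate only on the boundary of $U_\infty$ rather than in its interior, and a direct identity-theorem argument fails. To handle it one passes to the Fatou coordinate $\phi$ on each petal: the quantity $G:=\phi'\cdot\hat\nu$ is $f$-invariant on the petal (hence translation-periodic in the Fatou coordinate), and one must exploit $\hat\nu(\infty)=0$ together with the triple zero at $a$ and a growth estimate for $\phi'$ near the parabolic point to show $G\equiv 0$, and thus $\hat\nu\equiv 0$ on the petal.
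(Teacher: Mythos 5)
Your part (2) is essentially the paper's argument (pointwise $|\nu\circ f|=|\nu|$, ergodicity forces modulus-one invariant ratios to be constant, finite-dimensionality from the injectivity in (1)).

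Your part (1), however, diverges from the paper and — as you yourself flag — has a genuine gap. After deriving the functional equation $\hat\nu(f(z))=f'(z)\hat\nu(z)$ (which the paper also derives, with the same content modulo a sign convention on the $L_j$ term), you propagate vanishing \emph{forward} along critical and special orbits and then try to finish component-by-component with the identity theorem. This forces a case analysis by Fatou-component type, and it genuinely breaks in the parabolic case: the orbit of $a$ (and of the critical values) accumulates only at the parabolic point on $\partial U$, which lies in $J$, not in the interior of the Fatou component, so the zeros of $\hat\nu$ you produce do not accumulate inside the component and the identity theorem cannot be applied there. The Fatou-coordinate fix you sketch (analyzing $G=\phi'\cdot\hat\nu$) is plausible in principle but would require a real growth estimate near the parabolic point and is not carried out. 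A similar subtlety lurks for preperiodic Fatou components not directly visited by a critical orbit. The paper avoids the entire case analysis by running the functional equation \emph{backward}: fix $z$ in a Fatou component $D$ having infinitely many distinct preimage components, let $z_{-n_i}\to a$ be preimages of $z$ converging to an arbitrary $a\in J$; then $(f^{n_i})'(z_{-n_i})\to\infty$, so $\hat\nu(z_{-n_i})=\hat\nu(z)/(f^{n_i})'(z_{-n_i})\to 0$, whence $\hat\nu\equiv 0$ on $J$ by continuity, and then the maximum principle on the Fatou components together with $\hat\nu(\infty)=0$ gives $\hat\nu\equiv 0$ on $\C$. (The paper even notes an equally short variant: $\hat\nu$ vanishes on every repelling cycle, and these are dense in $J$.) This backward-orbit argument is uniform in the type of Fatou component and eliminates exactly the parabolic difficulty you ran into; I would recommend replacing your forward-orbit case analysis with it.
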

\begin{remark}\label{ref}
See also Proposition \ref{fix-c}. For part (1) of Propositions \ref{fix}=\ref{fix-c}, cf. \cite{L}, \cite{mak} (where a similar statement is proved by a different method assuming the critical points of $f$
are simple), and for part (2) of Propositions \ref{fix}-\ref{fix-c}, cf. \cite{MS}, Theorem 6.5.
\end{remark}
\begin{proof}
(1) It is obvious that $\cF$ is linear. To prove that it is injective, we use the following functional equation for the
Cauchy transform $\hat\mu(z)=\int\mu(x)|dx|^2/(z-x)$ of a given $\mu\in Fix(f^*)$, cf. \cite{L}. As $\mu\in L_\infty(\C)$ has a compact support
under the normalization of $f$,
it is well-known that $\hat\mu$ is a continuous function in $\C$, holomorphic off $J$, and $\hat\mu(\infty)=0$. 
By Lemma \ref{T} and since $\int\mu(x)T\frac{1}{z-x}|dx|^2=\int \frac{f^*\mu(x)}{z-x}|dx|^2=\int\frac{\mu(x)}{z-x}|dx|^2$,
one easily gets:
$$\hat\mu(z)=\frac{1}{f'(z)}\hat\mu(f(z))+\sum_{j=1}^p L_j(z)\hat\mu(v_j)-\hat I,$$
where $\hat I=0$ if $f$ a polynomial, and $\hat I=p_2(z)\hat\mu(a)+p_1(z)\hat\mu'(a)+p_0\hat\mu''(a)$ otherwise.
As $\mu\equiv 0$ if $\hat\mu\equiv 0$, it is enough to check that $\hat\mu\equiv 0$ whenever $\bar v=0$ where $\bar v=(\hat\mu(v_1),\cdots, \hat\mu(v_p), \hat\mu''(a), \hat\mu'(a), \hat\mu(a))$
(the latter $3$ coordinates in $\bar v$ are zero in the polynomial case).
So provided $\bar v=0$, we get from the functional equation that
$\hat\mu(z)=\frac{1}{f'(z)}\hat\mu(f(z))$ for all $z\in\C$. Since $\hat\mu$ is continuous on $\C$, holomorphic
in $F$ and $\hat\mu(\infty)=0$,
it's enough to show that $\hat\mu=0$ arbitrary close to each point of $J$. To this end, assume first that $F$ has a component $D$ with infinitely many different preimages $f^{-i}(D)$ and fix $z\in D$. Then every $a\in J$ is a limit point of preimages of $z$: there is $z_{-n_i}\to a$ such that $f^{n_i}(z_{-n_i})=z$.
On the other hand, $(f^{n_i})'(z_{-n_i})\to\infty$ as $n_i\to\infty$. Hence,
$\hat\mu(z_{-n_i})=\hat\mu(z)/(f^{n_i})'(z_{-n_i})\to 0$, and we are done.  If there is no component $D$ as above, then $J$ has a completely invariant component (for some iterate of $f$) which is not a rotation domain, and the proof in this case is very similar.
(Another argument that $\hat\mu\equiv 0$ is by noting that $\hat\mu(z)=\frac{1}{f'(z)}\hat\mu(f(z))$ implies $\hat\mu(z)=0$ along every repelling cycle of $f$ which, in turn, are dense in $J$.)

(2) First, let $E$ be a completely invariant ergodic subset of $J$ of positive measure and $\supp(\nu)=E$, for some $\nu\in Fix(f^*)\setminus \{0\}$.
As $|\nu|=|\nu\circ f|$ a.e. on $J$ and $\nu$ is measurable, $|\nu|$ is a constant function a.e. on $E$. Moreover,
as $Fix(f^*)$ is a linear space, it follows from here that the restriction of $Fix(f^*)$ on $L_\infty(E)$ is one-dimensional,
hence, is spanned by an invariant ergodic line field with its support to be $E$.
Assuming that $Fix(f^*)$ is not the null space, let $\nu_1,...,\nu_m$ be a basis of $Fix(f^*)$ where $m<\infty$ by part (1).
Now, if, for some $i$, $\supp(\nu_i)$ is not ergodic, then it splits into at most $m$ ergodic subsets
such that the restriction of $\nu_i$ on each of them, by the above, is proportional to an ergodic line field.
The collection of all such different ergodic line fields for all $i$ form the required basis of $Fix(f^*)$.


\end{proof}
\begin{coro}\label{c-neq}
For every $\nu\in Fix(f^*)$, $\nu\neq 0$, the vector
$$(\hat\nu(v_1), ..., \hat\nu(v_p), \hat\nu(a), \hat\nu'(a), \hat\nu''(a))\neq 0.$$
\end{coro}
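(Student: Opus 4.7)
The plan is to deduce this directly from part (1) of Proposition~\ref{fix}, which establishes that the map
$$\cF:\ Fix(f^*)\longrightarrow \C^{p+3},\qquad \nu\mapsto(\hat\nu(v_1),\dots,\hat\nu(v_p),\hat\nu(a),\hat\nu'(a),\hat\nu''(a))$$
is linear and injective. Since $\cF$ is linear we have $\cF(0)=0$, and injectivity then forces $\cF(\nu)\neq 0$ for every nonzero $\nu\in Fix(f^*)$. This is precisely the statement of the corollary, so no additional argument is required; the role of this corollary in the paper is merely to isolate the contrapositive of injectivity in a form convenient for later applications, and (in the polynomial case) to record that the last three coordinates $\hat\nu(a),\hat\nu'(a),\hat\nu''(a)$ are set to zero by convention, so the nonvanishing must be witnessed by some $\hat\nu(v_j)$.

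The only nontrivial content therefore lives in Proposition~\ref{fix}(1), which has already been proved via the functional equation
$$\hat\mu(z)=\frac{1}{f'(z)}\hat\mu(f(z))+\sum_{j=1}^p L_j(z)\hat\mu(v_j)-\hat I$$
combined with the fact that $\hat\mu(\infty)=0$ and the density of repelling cycles (or the transitivity along preimages of a suitable Fatou component) on $J$. Once that injectivity is granted, the corollary is immediate, and it is exactly the input needed in Theorem~\ref{thm:main}: for any ergodic invariant line field $\mu$, at least one of the test functions $\psi\in\cC$ satisfies $\int_J\mu\,\psi\neq 0$, since $\int_J\mu(x)\psi(x)|dx|^2$ ranges over the coordinates of $\cF(\mu)$ up to sign and complex conjugation. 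The main (indeed only) obstacle is the injectivity input from Proposition~\ref{fix}(1); beyond that, the corollary is a one-line logical consequence.
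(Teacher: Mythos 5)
Your argument is exactly right and is clearly the argument the paper intends: the corollary has no separate proof in the text precisely because it is the trivial kernel statement, i.e.\ the contrapositive of the linearity-plus-injectivity of $\cF$ established in Proposition~\ref{fix}(1). Nothing further is needed.
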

Recall that $\hat\nu(a)=\hat\nu'(a)=\hat\nu''(a)=0$ in the polynomial case.

Notice
\begin{lemma}\label{psinonint}
If $J$ carries an invariant line field $\mu$ then $r_\psi\notin L_1(J)$ whenever $\int_J \mu \psi \neq 0$, hence, for at leat some $\psi\in\cC$.
\end{lemma}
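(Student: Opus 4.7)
The plan is to prove the contrapositive by a short duality argument: assuming $r_\psi \in L_1(J)$, I will deduce that $\int_J \mu \psi = 0$, contradicting the hypothesis. The only ingredients needed are (i) the duality $\int g\,f^*\nu = \int \nu\,Tg$ quoted in the introduction, (ii) the fact that $T$ is a contraction on $L_1(J)$, and (iii) the resolvent identity $r_\psi = \psi + Tr_\psi$.

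First I would check that $T$ maps $L_1(J)$ to itself with norm $\leq 1$: the change of variables $x = f(y)$ with Jacobian $|f'(y)|^2$ and $f^{-1}(J) = J$ gives
$$\|Tg\|_{L_1(J)} \leq \int_J (|T|g)(x)\,|dx|^2 = \int_J |g(y)|\,|dy|^2 = \|g\|_{L_1(J)}.$$
Combined with Proposition \ref{psi}, which realises $r_\psi = \sum_{i\geq 0} T^i\psi$ locally uniformly on the open set $\Omega$ (and $J\setminus\Omega$ is negligible), the identity $r_\psi = \psi + Tr_\psi$ then holds a.e.\ on $J$, both sides being in $L_1(J)$ under our hypothesis.

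Now I would apply the duality $\int_J g\,f^*\nu = \int_J \nu\,Tg$ with $g = r_\psi \in L_1(J)$ and $\nu = \mu \in L_\infty(J)$. Invariance $f^*\mu = \mu$ gives
$$\int_J Tr_\psi \cdot \mu = \int_J r_\psi \cdot f^*\mu = \int_J r_\psi \cdot \mu.$$
Pairing the resolvent identity with $\mu$ and subtracting yields $\int_J \psi\,\mu = 0$, the desired contradiction. For the second assertion, Corollary \ref{c-neq} asserts that the vector $(\hat\mu(v_1),\ldots,\hat\mu(v_p),\hat\mu(a),\hat\mu'(a),\hat\mu''(a))$ is nonzero (the last three entries being zero by convention in the polynomial case). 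Since each component equals $\pm\int_J \mu\,\psi$ for the corresponding $\psi\in\cC$, some $\psi\in\cC$ satisfies $\int_J\mu\psi\neq 0$, and the main claim then gives $r_\psi\notin L_1(J)$ for this $\psi$.

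The only delicate point is the rigorous status of the functional equation $r_\psi = \psi + Tr_\psi$ as an identity in $L_1(J)$, since $r_\psi$ was constructed only on $\Omega$ and the series $\sum T^i\psi$ is a priori controlled only locally uniformly there; but this is not a real obstacle, because the $L_1$-hypothesis forces $r_\psi$ and $Tr_\psi$ to be well-defined $L_1$-functions whose difference equals $\psi$ a.e.\ on $\Omega$, and the null set $J\setminus\Omega$ is irrelevant to the integration against $\mu$.
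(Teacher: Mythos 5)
Your proof is correct and follows essentially the same route as the paper's: assume $r_\psi\in L_1(J)$, pair the resolvent identity $r_\psi=\psi+Tr_\psi$ with $\mu$, and use the duality $\int_J g\,f^*\nu=\int_J \nu\,Tg$ together with $f^*\mu=\mu$ to cancel $\int_J\mu r_\psi$ and conclude $\int_J\mu\psi=0$. The only cosmetic difference is that you invoke Corollary~\ref{c-neq} (which is the direct restatement of the injectivity in Proposition~\ref{fix}) to produce the required $\psi\in\cC$, and you spell out the $L_1$-contraction property of $T$; both are implicit in the paper's version.
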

\begin{proof}
By Proposition \ref{fix}, there exists $\psi\in \cC$ such that $\int_J\mu(x) \psi(x)|dx|^2\neq 0$.
Choose such $\psi$ and assume the contrary. Then the function $\mu r_\psi$ is integrable on $J$ and we can write:
$$\int_{J}\mu(x) r_\psi(x) |dx|^2=\int_{J}(f^*\mu)(x) r_\psi(x) |dx|^2=\int_{J}\mu(x) (T r_\psi)(x) |dx|^2=$$
$$\int_{J}\mu(x) [r_\psi(x) - \psi(x)] |dx|^2=
\int_{J}\mu(x) r_g(x)|dx|^2-\int_J \mu(x) g(x)|dx|^2 .$$
Hence, $\int_J \mu(x) \psi(x)|dx|^2=0$, a contradiction.
\end{proof}

\section{Rational maps with non-empty Fatou set. Proofs of the main results.}\label{PR}
\subsection{Proof of Proposition \ref{propmain}}\label{prop:proof}

(1)$\Rightarrow$ (2). 
First, it is easy to check that, for every $n$,
$$\int_{K_n'}|r_\psi(x)| |dx|^2 \le \int_J|\psi(x)||dx|^2<\infty.$$ 

Now assume that $\mu$ is an invariant line field. By Lemma \ref{psinonint}, there is $\psi\in\cC$ such that
$\int \mu(x)\psi(x)|dx|^2\neq 0$. Since $|\mu|$ takes either $1$ or $0$ a.e., we can write using Proposition \ref{psi}(2) and the duality of $T$ and $f^*$:
$$\int_{K_n'}|r_\psi(x)| |dx|^2\ge |\int_{K_n'}r_\psi(x)\mu(x) |dx|^2|=
|\int_{K_n'}\sum_{i=0}^\infty (T^i\psi)(x)\mu(x) |dx|^2|=$$
$$=|\sum_{i=0}^\infty\int_{f^{-i}(K_n')}\psi(y)\mu(y) |dy|^2|=|\int_{\cup_{i=0}^\infty f^{-i}(K_n')}\psi(y)\mu(y) |dy|^2|=$$
$$=|\int_J \psi(y)\mu(y)|dy|^2-\int_{K_n} \psi(y)\mu(y)|dy|^2.$$
Now, if $\mu$ is supported in $J\setminus K_{\pm\infty}$ (in particular, vanishes on $K_\infty$), then, as $n\to\infty$:
$$\int_{K_n} \psi(y)\mu(y)|dy|^2\to \int_{\cap_n K_n} \psi(y)\mu(y)|dy|^2=\int_{K_\infty} \psi(y)\mu(y)|dy|^2=0,$$
which implies that
$$
\liminf_{n>0}\int_{K_n'\setminus K'_\infty}|r_\psi(x)| |dx|^2\ge $$
$$\ge \liminf_{n>0}|\int_{K_n'\setminus K'_\infty}r_\psi(x)\mu(x) |dx|^2|=|\int_J \psi(y)\mu(y)|dy|^2|>0.$$

(2)$\Rightarrow$ (1). Let us fix any $\psi\in\cC$ so that (\ref{intk'n}) holds.


We'll often omit in this proof $\psi$ in the superscript writing $\sigma$, $\sigma_n$ etc instead of $\sigma^\psi$, $\sigma^\psi_n$ etc.
The following identity is crucial:
$$\int_{K'_n}|r_\psi(x)||dx|^2=\int_{K'_n}\sigma(x) r_\psi(x)|dx|^2=\int_{K'_n}\sigma(x) \sum_{i=0}^\infty (T^i\psi)(x)|dx|^2=$$
$$\sum_{i\ge 0}\int_{f^{-i}(K'_n)}\sigma_n(y)\psi(y)|dy|^2=\int_\C\sigma_n(y)\psi(y)|dy|^2.$$
So by (\ref{intk'n}),
\begin{equation}\label{nonzero}
\int_\C\sigma_n(y)\psi(y)|dy|^2\ge \delta,
\end{equation}
for some $\delta>0$ and all large $n$.


Let $\sigma_n^*=f^*\sigma_n$. Observe that $\sigma_n^*=\sigma_n$ on $\cup_{i=1}^\infty f^{-i}(K'_n)$ and $\sigma_n^*=0$ otherwise
(so $\sigma_n$ and $\sigma^*_n$ differ only on $K'_n\setminus K'_\infty$ where $\sigma_n=\sigma$ while $\sigma_n^*=0$).
Then, for any $g\in L_1(J)$,
$\int_J(\sigma_n(x)-\sigma^*_n(x))g(x)|dx|^2=\int_{K'_n\setminus K'_\infty}\sigma(x)g(x)|dx|^2\to 0$ as $n\to\infty$ because $area(K'_n\setminus K'_\infty)\to 0$. In particular, the weak* limit points of sequences $(\sigma_n)$, $(\sigma^*_n)$ are the same.

Now, let $\sigma_{n_k}\to\nu$ in the weak* topology. Then $\sigma^*_{n_k}=f^*\sigma_{n_k}\to\nu$.
Since $Tg\in L_1(J)$ for any $g\in L_1(J)$, hence, $\int_J f^*\sigma_{n_k} g = \int_J \sigma_{n_k} Tg \to \int_J \nu Tg
= \int_J f^*\nu g$, i.e. $f^*\sigma_{n_k}\to f^*\nu$ in the weak* topology. Thus $f^*\nu=\nu$, i.e. $\nu\in Fix(f^*)$.
On the other hand, $\nu\neq 0$ by (\ref{nonzero}), as $\psi\in L_1(J)$. Clearly, $||\nu||_\infty\le 1$.



\subsection{Proof of Theorem \ref{thm:main}}\label{thm:proof}

Let $\sigma^{\psi}_{E,n_k}\to\nu$ in the weak* topology, along a subsequence $n_k\to\infty$.
It follows that since $\supp(\sigma^{\psi}_{E,n})\subset E$ for all $n$, then $\supp(\nu)\subset E$.
On the other hand, as in the proof of Proposition \ref{propmain}, $\nu\in Fix(f^*)$. Therefore, by part (2) of Proposition \ref{fix} and the conditions of the theorem, $\nu=0$ on $E\setminus E_\mu$ and $\nu=t\mu$ on $E_\mu$,
for some $|t|\le 1$.
Hence,
\begin{equation}\label{extrnk}
\int_{E_\mu}\sigma^\psi_{n_k}(y)\psi(y)|dy|^2\to \int_{E_\mu} t\mu(y) \psi(y)|dy|^2.
\end{equation}
We have to prove that $t=1$.
Repeating a calculation of Proposition \ref{propmain}, we have:
$$\int_{K'_n\cap E_\mu}|r_\psi(x)||dx|^2=\int_{K'_n\cap E_\mu}\sigma^\psi(x) r_\psi(x)|dx|^2=\int_{K'_n\cap E_\mu}\sigma^\psi(x) \sum_{i=0}^\infty (T^i\psi)(x)|dx|^2=$$
$$\sum_{i\ge 0}\int_{f^{-i}(K'_n\cap E_\mu)}\sigma^\psi_n(y)\psi(y)|dy|^2=\int_{E_\mu}\sigma^\psi_n(y)\psi(y)|dy|^2.$$
Therefore,
\begin{multline}\label{basicineq}
\int_{E_\mu}\sigma^\psi_{n_k}(y)\psi(y)|dy|^2= \\ \int_{K'_{n_k}\cap E_\mu}|r_\psi(x)||dx|^2\ge |\int_{K'_{n_k}\cap E_\mu}\mu(x) r_\psi(x)|dx|^2|= \\ |\int_{K'_{n_k}\cap E_\mu}\mu(x) \sum_{i=0}^\infty T^i\psi(x)|dx|^2|=|\int_{\cup_{i=0}^\infty f^{-i}(K'_{n_k}\cap E_\mu)}\mu(y) \psi(y)|dy|^2| \\
\to |\int_{E_{\mu}}\mu(y)\psi(y)|dy|^2|
\end{multline}
where we use that $f^*\mu=\mu$ and $area(\cup_{i=0}^\infty f^{-i}(K'_{n_k}\cap E_\mu))\to area(E_\mu)$.
Coupled with (\ref{extrnk}), we obtain:
$$t\int_{E_\mu} \mu(y) \psi(y)|dy|^2\ge |\int_{E_\mu}\psi(y)\mu(y)|dy|^2|.$$
By the normalization of $\mu$, $\int_J \mu\psi=\int_{E_\mu}\mu\psi>0$, which implies that $t$ is real and, moreover, $t\ge 1$. As $|t|\le 1$,
we conclude that $t=1$, in other words,
$\sigma^\psi_{E,n}\to\nu_0$ along all subsequences, in the weak* topology.
This proves item (1).
Having this, it is easy to see that the inequality in (\ref{basicineq}) turns into an equality at the limit, which proves item (2).

(3). It is enough to prove that $\sigma^{\psi}_{E,n}\to \mu$ in measure on every bounded subset $F$ of $E_\mu\cap\C$.
By the proven part, for every $g\in L_1(E_\mu)$,
$$\int_{E_\mu}\sigma^\psi_{E,n}(x)g(x)|dx|^2\to \int_{E_\mu} \mu(x)g(x)|dx|^2.$$
Putting here $g=(1/\mu) {\bf 1}_F$,
one gets
$$\int_{F}\{1-\frac{\sigma^\psi_{E,n}(x)}{\mu(x)}\}|dx|^2\to 0$$
where the sequence of functions
$$b_n(x):=\frac{\sigma^\psi_{E,n}(x)}{\mu(x)}$$
is such that $|b_n(x)|=1$ a.e. on $F$.
This along implies
that $b_n\to 1$ in measure on each bounded subset $F$ of $E_\mu\cap\C$, i.e., the first assertion of (3).
The second, equivalent assertions follows immediately from Remark \ref{remarksigma} (2).

\subsection{Proof of Corollary \ref{coro:quadr}}

Recall that $f_c$ admits at most one invariant line field $\mu$ on $J_c$, so that $|\mu|=1$ on its support $E_\mu$ and $\mu=0$ elsewhere.
It follows that the equivalence of conditions (1)-(4a) follows from Proposition \ref{propmain} and Theorem \ref{thm:main}. Let us prove
(4b).

Let, for some $m$, $V$ be a neighbourhood $V$ of $P_m$ such that $\overline{V}\cap (P\setminus P_m)=\emptyset$.
For all $i\in\{1,...,p_m-1\}$, sets $F_c^i(P_m)$ are pairwise disjoint along with some their simply connected neighborhoods $V_i$, moreover,
all $V_i$ are disjoint with $V$.
Hence, symmetric sets $-V_i$, $i=1,...,p_n$, are pairwise disjoint simply connected neighborhoods of the corresponding compact sets $-f_c^i(P_m)$ and all are disjoint with $V$. In Particular, $W':=\cup_{i=1}^{p_m-1}$ contains $(-P_c)\setminus P_m$.
Now, fix a big enough $N$ such that $K'_N\setminus V\subset W'$. Since $W'$ consists of a finitely many simply connected components, by Koebe distortion theorem, there is $C>0$ such that, for any branch $g:=f_c^{-i}$ defined on any component $U$ of $W'$, and any compact $E\subset K'_N\cap U$ , $area(g(E))/area(g(K'_N\cap U))\le C area(E)/area(K'_N\cap U)$. On the other hand, $area(\cup_{i\ge 0}f^{-i}(K'_N)\le area(J_c)<\infty$
while $area(K'_n\setminus K'_\infty)\to 0$. This implies that $area(\cup_{i\ge 0}f^{-i}(K'_n\cap W')\to 0$. This, in turn, is enough to get the conclusion of item (4b).


\section{Rational maps with the Julia set equals the Riemann sphere}\label{rat-c}

Here we consider the alternative case: $J=\hat\C$.

This is equivalent to say that $f$ has neither rotation domains, nor attracting or parabolic cycles.
In particular, $f$ fas at least $3$ distinct fixed points. Note that they are not critical points as otherwise the Fatou set would be non-empty.
After a Mobius conjugation, we can assume that
$$f(\infty)=\infty, f(1)=1, f(0)=0.$$

We also assume that
 $f$ is not exceptional, i.e., $f$ is not double covered with by an integral torus endomorphism,

As before,
$P\subset \hat\C$ denotes the postcritical set of $f$ and
$\cV:=\{v_1,...,v_p\}\subset \C$ is the set of all geometrically different critical values of $f$ that lie in $\C$.

Let
$$\Omega=\C\setminus \{P\cup\{0,1\}\}.$$


We replace the Cauchy kernel $1/(x-z)$ which is no longer integrable in the considered case on $J=\hat\C$ by the following kernel which is common in the theory of quasiconformal maps \cite{a} (see also \cite{mak}):
$$\psi_z(x)=\frac{z(z-1)}{x(x-1)(x-z)}$$
and for which $\psi_z\in L_1(\C)$ for every $z\in \C$.

\

The list of test functions becomes
$$\cC=\{\psi_{v_j}(x), j=1,...,p\}.$$

\

We will usually assume that the following two conditions ($\ast$)-($\ast$$\ast$) hold:

($\ast$) $P\neq \hat\C$.
Note that as $J=\hat\C$, this is equivalent to saying that $P$ is nowhere dense in $\C$.

($\ast$$\ast$) $d(f^n(z), P)\to 0$ for almost every point $z\in J$
(where $d$ is the spherical distance).
This condition is not really a restriction for us, because of the following fact (Theorem 3.17 of \cite{mcm}):

{\it Let $f$ carry an invariant line field on its Julia set. Then either $f$ is exceptional, or $d(f^n(z), P)\to 0$ for a.e. $z\in J$.}

In other words, ($\ast$$\ast$) holds automatically assuming that $f$ carries an invariant line field on $J$ (and not exceptional).

Lemma \ref{kk'} as well as its proof hold under the assumptions ($\ast$)-($\ast$$\ast$)
\begin{lemma}\label{kk'-c}
Suppose that $P\neq \hat\C$ and $d(f^n(z), P)\to 0$ for almost every point $z\in J$
where $d$ is the spherical distance.
Given a small enough neighborhood $W$ of $P$ such that $area(\partial W)=0$,
let $K(W):=\{x\in W\cap J: f^n(x)\in W, n=1,2,...\}$.
Then $K(W)$ is a fundamental set.
\end{lemma}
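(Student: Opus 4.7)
The plan is to follow the proof of Lemma \ref{kk'} almost verbatim, using hypothesis $(\ast\ast)$ in place of the classical fact (valid when $J \neq \hat\C$) that $\omega(x) \subset P_J$ for a.e. $x \in J$. Since $J = \hat\C$ here, $P_J = P$. Condition (1) of Definition \ref{def:basic} is immediate: $P \subset W$ by the choice of $W$, and $f(P) \subset P \subset W$ gives $P \subset K(W)$; the inclusion $f(K(W)) \subset K(W)$ follows directly from the defining orbit condition.

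For the measure statements in condition (2), I would first exploit $(\ast\ast)$: since $P$ is compact and $W$ is a neighborhood of $P$, $W$ contains some spherical $\varepsilon$-neighborhood of $P$, so for a.e.\ $z \in J$ there exists $N(z)$ with $f^n(z) \in W$ for all $n \geq N(z)$. Setting $A_N = \{z \in J : f^n(z) \in W \text{ for all } n \geq N\}$, the sets $A_N$ are nested and $\cup_N A_N$ has full area in $\hat\C$. Picking $N$ with $area(A_N) > 0$ and observing that $z \in A_N$ forces $f^N(z) \in K(W)$, i.e.\ $A_N \subset f^{-N}(K(W))$, the standard fact that preimages under rational maps preserve null sets (outside the finite critical set $f$ is locally biholomorphic, so one partitions into finitely many univalent branches) yields $area(K(W)) > 0$. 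The same inclusion applied for every $N$ shows that $\cup_{j \geq 0} f^{-j}(K(W))$ has full area in $J$. Since $f(K(W)) \subset K(W)$ gives $K(W) \subset f^{-1}(K(W))$, the inductive decomposition $f^{-n}(K(W)) = K(W) \cup K'(W) \cup f^{-1}(K'(W)) \cup \cdots \cup f^{-(n-1)}(K'(W))$ then yields $area(J \setminus \{K(W) \cup \cup_{j \geq 0} f^{-j}(K'(W))\}) = 0$.

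The step requiring the ``small enough'' hypothesis, and the only real obstacle, is $area(K'(W)) > 0$. If on the contrary $area(K'(W)) = 0$, then $f^{-1}(K(W)) = K(W)$ modulo null sets, whence inductively $\cup_{j \geq 0} f^{-j}(K(W)) = K(W)$ modulo null sets, so by the preceding paragraph $K(W)$ has full area in $\hat\C$; but $K(W) \subset W$, forcing $W$ to have full area. Since $P \neq \hat\C$ is compact, one may shrink $W$ so that $area(\hat\C \setminus W) > 0$, a direct contradiction. This smallness requirement is the sole place where the hypothesis $P \neq \hat\C$ intervenes; the remainder of the argument is bookkeeping parallel to the proof of Lemma \ref{kk'}.
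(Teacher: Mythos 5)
Your proof is correct and takes essentially the same route the paper intends: the paper itself remarks that Lemma \ref{kk'} and its proof carry over once hypothesis $(\ast\ast)$ is used in place of the classical fact $\omega(x)\subset P_J$ a.e., which is exactly your strategy. The extra bookkeeping you supply (the nested sets $A_N$, the inductive decomposition of preimages, and the null-set-preservation argument) is a faithful fleshing out of the same argument, with the ``small enough'' hypothesis playing the role that $area(J\setminus W)>0$ plays in Lemma \ref{kk'}.
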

This allows us to have a fundamental set $K(W)$ contained in any given small enough neighborhoods $W$ of $P$
with $area(\partial W)=0$.
\begin{theorem}\label{main-c}
Assume that $P\neq \hat\C$ and $d(f^n(z), P)\to 0$ for almost every point $z\in J$
(e.g. the latter holds if $f$ carries an invariant line field on its Julia set and not exceptional). By Lemma \ref{kk'-c}, for every decreasing sequence of small neighborhoods of $P$ there corresponds a sequence of fundamental sets.
Let us fix such a decreasing sequence of fundamental sets $K_n$, $n=1,2,...$. As earlier,
$K_\infty=\cap_n K_n$, $K'_{\infty}=f^{-1}(K_\infty)\setminus K_\infty$
and $K_{\pm\infty}=\cup_{i=0}^\infty f^{-i}(K_\infty)$.

Then the main results, Proposition \ref{propmain} and Theorem \ref{thm:main}, hold literally also in the considered case $J=\hat\C$, under the assumptions and notations as above.
\end{theorem}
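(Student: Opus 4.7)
The plan is to verify that all four preparatory results of Section~\ref{AS} --- Lemma~\ref{T}, Proposition~\ref{psi}, Proposition~\ref{fix} and Lemma~\ref{psinonint} --- admit analogs in the present $J = \hat\C$ setting with the modified Ahlfors kernel $\psi_z$ and the modified test-function family $\cC = \{\psi_{v_1}, \ldots, \psi_{v_p}\}$. Once these analogs are in place, the proofs of Proposition~\ref{propmain} and Theorem~\ref{thm:main} given in Section~\ref{PR} apply verbatim, because those arguments rely only on the duality $\int g \cdot f^*\nu = \int \nu \cdot Tg$, the $L_1$-integrability of $\psi$ on $J$, the holomorphicity and non-triviality of $r_\psi$ on $\Omega$, the injectivity of the finite evaluation map $\cF$ on $Fix(f^*)$, and weak-$\ast$ compactness --- none of which uses that the Fatou set is non-empty.

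Step~1 is the analog of Lemma~\ref{T}. I apply the same contour trick to
\begin{equation*}
I(z,x) = \frac{1}{2\pi i}\oint_{|w|=R} \frac{\psi_z(w)}{f'(w)(f(w)-x)}\, dw,
\end{equation*}
which vanishes as $R \to \infty$ since $\psi_z(w) = O(|w|^{-3})$ and the denominator grows polynomially in $|w|$. The interior residues are: (a)~preimages of $x$, summing to $T\psi_z(x)$; (b)~the critical points of $f$, giving $\sum_j L_j(z)\psi_{v_j}(x)$ after the partial-fraction identity $\psi_u(x) = (u-1)/x - u/(x-1) + 1/(x-u)$ is used to rewrite $1/(x-v_j)$; (c)~the pole of $\psi_z$ at $w = z$, giving $\psi_{f(z)}(x)/f'(z)$ after the same rewriting; and (d)~the poles of $\psi_z$ at the fixed points $w = 0, 1$ of $f$, contributing extra terms $\alpha(z)/x + \beta(z)/(x-1)$ with $\alpha, \beta$ low-degree polynomials in $z$ depending only on $f'(0), f'(1)$. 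The resulting identity has exactly the shape of Lemma~\ref{T} together with these two new boundary corrections, and the $L_j$ are as in Remark~\ref{l-rem}.

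Step~2 is the analog of Proposition~\ref{psi}. The Koebe-distortion argument of Case~1 transfers to any ball $B(x_0, 2\rho) \subset \Omega$, since $B(x_0, 2\rho) \cap P = \emptyset$ forces every branch of $f^{-i}$ there to be univalent. The substitute for the pairwise disjointness of the preimage components --- which was automatic in the Fatou case --- comes from hypothesis $(\ast\ast)$: almost every orbit of $f$ is absorbed by arbitrarily small neighborhoods of $P$, so up to a set of arbitrarily small measure the components of $f^{-i}(B(x_0, 2\rho))$ are pairwise disjoint. Combined with the global $L^1(\hat\C)$-integrability of $\psi_z$ --- a genuinely new feature compared with the Cauchy kernel --- this yields a summable Poincar\'e-type majorant, and hence local uniform convergence of $r_\psi = \sum_i T^i\psi$ to a holomorphic function on $\Omega$. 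Step~3 is the analog of Proposition~\ref{fix}: pair the Step~1 identity with $\nu \in Fix(f^*)$ and use $\int \nu \cdot T\psi_z = \int \psi_z \cdot f^*\nu = \hat\nu(z)$. The crucial simplification is that $\psi_0 \equiv \psi_1 \equiv 0$ by the factor $z(z-1)$, so $\hat\nu(0) = \hat\nu(1) = 0$ automatically and the boundary coefficients $\alpha, \beta$ contribute nothing. Under the hypothesis $\hat\nu(v_j) = 0$ for all $j$, the functional equation collapses to $\hat\nu(f(z)) = -f'(z)\hat\nu(z)$, and iterating along any repelling periodic orbit --- together with continuity of $\hat\nu$ and the density of repelling cycles in $J = \hat\C$ --- forces $\hat\nu \equiv 0$, hence $\nu = 0$. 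Part~(2) of Proposition~\ref{fix} (ergodic decomposition) and Lemma~\ref{psinonint} are purely formal and transfer unchanged.

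The main obstacle is Step~2: in the absence of a Fatou set one loses both the disjointness of preimage components and the availability of ``safe'' test points off $J$. The remedy is precisely the pair consisting of hypothesis $(\ast\ast)$ and the global $L^1(\hat\C)$-integrability of the Ahlfors kernel $\psi_z$; the first replaces the Fatou-set disjointness, the second replaces the local $L^1$-nature of the Cauchy kernel (which fails on the full sphere). All remaining steps are cosmetic modifications of the arguments already given in Sections~\ref{AS}--\ref{PR}.
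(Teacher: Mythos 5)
Your overall plan---adapt Lemma~\ref{T}, Proposition~\ref{psi}, Proposition~\ref{fix} and Lemma~\ref{psinonint} to the Ahlfors kernel and then reuse the Section~\ref{PR} arguments verbatim---is exactly the paper's strategy, and your Step~2 intuition (pairwise disjointness up to null sets comes from $(\ast\ast)$, global $L^1$-integrability of $\psi_z$ replaces local $L^1$-integrability of the Cauchy kernel) is the right one. But Step~1 has a genuine error that your Step~3 then compounds.

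You apply the residue theorem to $\dfrac{\psi_z(w)}{f'(w)\,(f(w)-x)}$, whereas the correct integrand (the one the paper uses in Lemma~\ref{T-c}) is
\[
\Phi_{z,x}(w)=\frac{1}{f'(w)}\,\psi_{f(w)}(x)\,\psi_z(w)
 =\frac{1}{f'(w)}\cdot\frac{f(w)\bigl(f(w)-1\bigr)}{x(x-1)\bigl(x-f(w)\bigr)}\cdot\frac{z(z-1)}{w(w-1)(w-z)}.
\]
The extra factor $f(w)\bigl(f(w)-1\bigr)$ has simple zeros at $w=0$ and $w=1$ (because $f(0)=0$, $f(1)=1$, neither a critical point), which cancel the simple poles of $\psi_z(w)$ there; that is precisely what the normalization of the three fixed points at $0,1,\infty$ is for. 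With the paper's integrand the residue at $w=z$ is already $\frac{1}{f'(z)}\psi_{f(z)}(x)$, the critical-point residues come out directly as $L_j(z)\psi_{v_j}(x)$, and there are no terms in $1/x$ or $1/(x-1)$: $T$ genuinely preserves the span of the $\psi$'s. Your integrand, by contrast, does have poles at $w=0,1$ and also produces raw $1/(x-u)$ residues needing the partial-fraction rewriting; the resulting $\alpha(z)/x+\beta(z)/(x-1)$ terms would in fact have to cancel against the partial-fraction corrections (since the two computations must agree), but you do not notice this and instead assert they persist.

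Your attempted patch in Step~3 is a non-sequitur. You say the extra terms ``contribute nothing'' because $\psi_0\equiv\psi_1\equiv 0$. But $\psi_0\equiv 0$ is the statement that the function $u\mapsto\psi_u$ vanishes at $u=0$; it says nothing about $\int\nu(x)/x\,|dx|^2$ or $\int\nu(x)/(x-1)\,|dx|^2$, which are what $\alpha(z)$ and $\beta(z)$ would multiply after pairing with $\nu$. In fact when $J=\hat\C$ those Cauchy-type integrals need not even be defined: $1/x$ is not in $L^1(\C,|dx|^2)$, which is precisely the problem the Ahlfors kernel was introduced to fix. Your extra terms therefore reintroduce the integrability obstruction that the whole construction is designed to avoid, and the injectivity argument for $\cF^\infty:Fix(f^*)\to\C^p$ would break down. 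The fix is to redo the contour integral with the paper's integrand $\Phi_{z,x}$ and obtain the clean identity $(T\psi_z)(x)=\frac{1}{f'(z)}\psi_{f(z)}(x)+\sum_j L_j(z)\psi_{v_j}(x)$ of Lemma~\ref{T-c}, with no boundary corrections at all; after that your Step~3 reasoning (pairing with $\nu$, iterating the functional equation along dense repelling cycles) goes through, modulo a sign: the recursion is $\hat\nu^\infty(f(z))=f'(z)\,\hat\nu^\infty(z)$, not $-f'(z)\,\hat\nu^\infty(z)$.

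One further remark on Step~2: the paper's Proposition~\ref{psi-c} does not argue that the preimage components of a ball $B(x_0,2\rho)$ are ``pairwise disjoint up to a small measure''; it chooses a fundamental set $K(W)$ inside a $\rho$-neighborhood $W$ of $P$, picks $j$ with $area\bigl(B_0\cap f^{-j}(K')\bigr)>0$, and uses that the sets $f^{-i}(K')$, $i\ge 0$, form a partition of $J$ up to a null set (Definition~\ref{def:basic}), so the change of variables gives $\int_{f^{-j}(K')}\sum_i|T|^i|\psi|\le\|\psi\|_{L^1(\C)}$. That gives a genuine summable majorant; your formulation would need to be tightened to that.
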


Proof repeats the ones of Proposition \ref{propmain} and Theorem \ref{thm:main}, with the only changes in Lemma \ref{T}, Proposition \ref{psi} and Proposition \ref{fix} of Section \ref{AS} that we describe in detail now.

Lemma \ref{T} turns into the following (cf. \cite{mak} for the case of simple critical points):
\begin{lemma}\label{T-c}
Assume $f$ is a rational function of degree at least $2$ which is normalized so that
$\infty$, $1$, $0$ are fixed points of $f$ and not critical points of $f$. Then, for
any $z\in\C$ such that $f'(z)\neq 0$ and $x\in\C$ such that $x\notin \{P\cup\{1\}\cup\{0\}\}$ and $x\neq f(z)$:
\begin{equation}\label{eqT-c}
(T\psi_z)(x)=\frac{1}{f'(z)}\psi_{f(z)}(x)+\sum_{j=1}^p L_j(z)\psi_{v_j}(x).
\end{equation}
\end{lemma}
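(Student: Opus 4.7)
The plan is to mimic the contour-integral argument of Lemma~\ref{T}, with the Cauchy kernel $1/(w-z)$ replaced by the kernel $\psi_z(w) = z(z-1)/[w(w-1)(w-z)]$ dictated by $L_1(\hat\C)$-integrability on the Riemann sphere. Concretely, for large $R$ I would form
$$I(z,x) = \frac{1}{2\pi i}\int_{|w|=R} \frac{\psi_z(w)\,\psi_{f(w)}(x)}{f'(w)}\,dw,$$
show $I(z,x)\to 0$ as $R\to\infty$, and equate the sum of the finite residues of the integrand to zero to obtain \eqref{eqT-c}.

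The residues fall into three categories. At each preimage $w_i$ of $x$, the factor $\psi_{f(w)}(x)$ has a simple pole with leading part $-1/[f'(w_i)(w-w_i)]$, yielding residue $-\psi_z(w_i)/(f'(w_i))^2$; summing gives $-(T\psi_z)(x)$. The kernel $\psi_z(w)$ has simple poles at $w\in\{0,1,z\}$: the residue at $w=z$ supplies the term $\psi_{f(z)}(x)/f'(z)$, while those at $w=0,1$ vanish because $f(0)=0$, $f(1)=1$ and the numerator of $\psi_{z'}$ vanishes at $z'=0,1$, so that $\psi_0 \equiv \psi_1 \equiv 0$ --- this is precisely where the normalization $f(0)=0$, $f(1)=1$ is used. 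At each critical point $c\in\C$ with $v_j := f(c)\in\C$, I would split
$$\psi_{f(w)}(x) = \psi_{v_j}(x) + (f(w) - v_j)\,Q(w,x)$$
with $Q$ holomorphic in $w$ near $c$ (explicitly, $Q(w,x) = -1/[x(x-1)] + 1/[(x-f(w))(x-v_j)]$, obtained from the partial-fraction expansion of $\psi_{z'}(x)$ in $z'$). Since $f(w)-v_j$ vanishes to order $m\geq 2$ at $c$ and $f'(w)$ vanishes to order $m-1$, the $(f(w)-v_j)Q$ part contributes no residue, while the $\psi_{v_j}(x)$ part contributes $L_c(z)\,\psi_{v_j}(x)$ with $L_c(z)$ the residue of $\psi_z(w)/f'(w)$ at $w=c$; setting $L_j(z) = \sum_{c : f(c)=v_j,\,f'(c)=0}L_c(z)$ then produces the final sum in \eqref{eqT-c}. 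No residue arises at poles of $f$ in $\C$, since there $f(w)/f'(w)$ has a simple zero.

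It remains to check $I(z,x) \to 0$. Since $\infty$ is a fixed, non-critical point of $f$, one has $f(w)\sim aw$ with $a\ne 0$ and $f'(w)\to a$ as $w\to\infty$; hence $\psi_z(w)=O(w^{-3})$, $\psi_{f(w)}(x)=O(w)$, and the integrand is $O(w^{-2})$, so the integral is $O(1/R) \to 0$. The one substantive point requiring genuine care is the collapse at critical points: that the $c$-residue reduces to a pure $\psi_{v_j}(x)$ multiple, rather than a combination involving higher-order poles at $x=v_j$. This rests on the precisely matched vanishing orders of $f(w)-v_j$ and $f'(w)$ at $c$, together with the simple-pole structure of $\psi_{z'}(x)$ at $x=z'$, and is the main mechanism distinguishing this lemma from its polynomial counterpart.
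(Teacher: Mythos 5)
Your proposal is correct and follows essentially the same contour-integral argument as the paper: form $I=\frac{1}{2\pi i}\int_{|w|=R}\frac{\psi_z(w)\psi_{f(w)}(x)}{f'(w)}\,dw$, show $I=0$ via the $O(w^{-2})$ decay at infinity, and collect residues at preimages of $x$, at $w=z$, at $w=0,1$ (which vanish by the normalization), and at the critical points. The only cosmetic difference is in computing the critical-point residue: you isolate the $\psi_{v_j}(x)$ contribution via the decomposition $\psi_{f(w)}(x)=\psi_{v_j}(x)+(f(w)-v_j)Q(w,x)$, whereas the paper writes $g(w)=(w-c)^{m}/f'(w)$ and extracts the appropriate Laurent coefficient $B_{m-1}$; both yield $L_c(z)=\operatorname{Res}_{w=c}\bigl[\psi_z(w)/f'(w)\bigr]$ and rest on the same order-cancellation between $f(w)-v_j$ and $f'(w)$ at $c$.
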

\begin{proof}
Let
$$\Phi_{z,x}(x)=\frac{1}{f'(w)}\psi_{f(w)}(x)\psi_z(w)=
\frac{1}{f'(w)}\frac{f(w)(f(w)-1)}{x(x-1)(x-f(w))}\frac{z(z-1)}{w(w-1)(w-z)}.$$
Notice that:
\begin{enumerate}
\item $\Phi(w)=O(1/w^2)$ as $w\to\infty$,
\item poles of $f$ in $\C$ are zeroes of $\Phi$ and $w=0,1$ are regular points of $\Phi$.
\end{enumerate}
For fixed $z, x\in\C$ and $R>0$ big enough, we calculate
$$I=\frac{1}{2\pi i}\int_{|w|=R}\Phi_{z,x}(w)dw.$$
On the one hand, by item (1), $I=0$ for all $R$ big enough.
On the other, using item (2):
$$I(x,z)=-(T\psi_z)(x)+\frac{1}{f'(z)}\psi_{f(z)}(x)+\sum_{c: f'(c)=0}I_c(z,x)$$
where
$$I_c(z,x)=\frac{1}{2\pi i}\int_{|w-c|=\epsilon}\frac{1}{f'(w)}\psi_{f(w)}(x)\psi_z(w)dw,$$
for some small enough $\epsilon>0$.
For any such $c$, $f(w)=v+D(w-c)^{m+1}+O(w-c)^{m+2}$ where $v=f(c)$, $D\neq 0$ and $m=m_c>0$ is the multiplicity of $c$.
Let
$$g(w)=\frac{(w-c)^m}{f'(w)}$$
so that $g$ is holomorphic near $c$ and $g(c)\neq 0$.
Repeating a calculation as in \cite{Lpert}, we get:
$$I_c(x,z)=z(z-1)B_{m-1}(c,z)\psi_v(x)$$
where $B_l=B_l(c,z)$ are defined by the expansion:
$$\sum_{l=0}^\infty B_l(w-c)^l=\frac{g(w)}{w(w-1)(w-z)}.$$
Then
$$L_j(z)=\sum_{c\in\C: f'(c)=0, v_j=f(c)}z(z-1)B_{m_c-1}(c, z)$$
which gives us (\ref{eqT-c}).
\end{proof}
Proposition \ref{psi} is now replaced by
\begin{prop}\label{psi-c}
Suppose that ($\ast$)-($\ast$$\ast$) hold (in particular, the open set $\Omega$ is dense in $\C$).
Let
$\psi\in\cC$, i.e., $\psi=\psi_v$, for some $v\in \cV$. Then we have:

(1) the series
$|r|_{\psi}=\sum_{i=0}^\infty (|T|^i \psi)(x)$
converge locally uniformly to continuous functions in
$\Omega$;

(2) the function $r_\psi$ is well-defined and holomorphic in $\Omega$, and
$$r_\psi(x)=\sum_{i=0}^\infty (T^i \psi)(x)$$
where the series converges locally uniformly in $\Omega$.
\end{prop}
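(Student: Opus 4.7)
The plan is to follow the proof of Proposition \ref{psi}, with the necessary modifications to handle the absence of a Fatou set. Fix $x_0\in\Omega$ and $\rho>0$ so that $B:=B(x_0,2\rho)\subset\Omega$; in particular, $B$ is at positive distance from $P\cup\{0,1\}$, so each branch of $f^{-i}$ on $B$ extends univalently, and Koebe distortion gives constants $C_1,C_2$ such that (\ref{koebe}) and (\ref{koebeunif}) hold for $\psi=\psi_v$, using that the only singularities of $\psi_v$ are the simple poles at $0,1,v$, all in $\hat\C\setminus\overline{B}$. Consequently, exactly as in Proposition \ref{psi}, once we know $|r|_\psi(x_1)<\infty$ at a single point $x_1\in B(x_0,\rho)$, the Koebe argument upgrades this to a uniform bound on $B(x_0,\rho)$ and yields the local uniform convergence asserted in part (1).

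The one step where the proof of Proposition \ref{psi} does not translate directly is the production of such an $x_1$: in the Fatou-nonempty case one chose $x_1\in F$ (in Case 1) or $x_1\in B(x_0,\rho)\cap\Omega\setminus J$ (in Case 2), both unavailable here. Instead, by Fubini it suffices to show
$$\int_B\sum_{i=0}^\infty(|T|^i\psi_v)(x)\,|dx|^2<\infty.$$
Using univalence of the branches on $B$ and change of variables exactly as in (\ref{rpsiconv}), the left hand side equals
$$\sum_{i=0}^\infty\int_{f^{-i}(B)}|\psi_v(y)|\,|dy|^2=\int_{\C}|\psi_v(y)|\,N_B(y)\,|dy|^2,$$
where $N_B(y):=\#\{i\ge 0:f^i(y)\in B\}$. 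Condition $(\ast\ast)$ together with $\dist(B,P)>0$ forces $N_B<\infty$ for almost every $y$. The task of showing $\int|\psi_v|\,N_B<\infty$ is the main technical obstacle: I would handle it by splitting $\C$ into a small neighborhood $V$ of $P$ and its complement, bounding $N_B$ uniformly on $\C\setminus V$ via a compactness/Egorov argument applied to the pointwise convergence $\dist(f^i(y),P)\to 0$, and using that $\psi_v\in L_1(\C)$ to shrink the contribution over $V$.

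Given this finiteness, almost every $x_1\in B$ satisfies $|r|_\psi(x_1)<\infty$; pick such $x_1\in B(x_0,\rho)$ and run the Koebe argument of Case 1 of Proposition \ref{psi} verbatim to conclude part (1). For part (2), iterating Lemma \ref{T-c} expresses each $T^i\psi_v$ as a finite linear combination of functions $\psi_w$, which is rational with poles contained in $P\cup\{0,1\}$, hence holomorphic on $\Omega$; the partial sums $\sum_{i=0}^N T^i\psi_v$ are dominated in modulus by $\sum_{i=0}^N|T|^i\psi_v$, so by part (1) they converge locally uniformly on $\Omega$ to a holomorphic function, which is $r_\psi$. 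The hard part remains the bound $\int|\psi_v|\,N_B<\infty$: this is precisely where the condition $(\ast\ast)$ must be used quantitatively and where the argument departs from Proposition \ref{psi}.
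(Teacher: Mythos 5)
Your overall framework---reduce to producing a single point $x_1\in B(x_0,\rho)$ with $|r|_\psi(x_1)<\infty$ and then run the Koebe-distortion upgrade from Proposition \ref{psi}---is exactly the paper's. But the step you flag as ``the main technical obstacle'' (proving $\int_\C|\psi_v|\,N_B<\infty$) is a genuine gap, and it is also not the route the paper takes. Your Egorov sketch does not close it: Egorov produces a set $G_\eta$ of measure $\ge 1-\eta$ on which $N_B$ is uniformly bounded, but on $\C\setminus G_\eta$ the counting function $N_B$ is a priori unbounded and not known to be integrable, so the remainder term $\int_{\C\setminus G_\eta}|\psi_v|N_B$ is uncontrolled. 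Since $\{f^{-i}(B)\}_i$ is in general far from a disjoint family, there is no cheap reason for $\int|\psi_v|N_B$ to be finite, and trying to bound it essentially reproduces the statement you are trying to prove.

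The paper sidesteps the $N_B$-counting problem entirely by exploiting the partition structure of a fundamental set, which is precisely what hypotheses ($\ast$)--($\ast\ast$) buy you via Lemma \ref{kk'-c}. Choose $K=K(W)$ with $W$ inside the $\rho$-neighborhood of $P$, set $K'=f^{-1}(K)\setminus K$, and pick $j\ge 0$ so that $area\bigl(B_0\cap f^{-j}(K')\bigr)>0$ (possible because $\cup_j f^{-j}(K')$ has full measure in $J=\hat\C$). Since $K,f^{-j}(K'),j\ge 0$, partition $J$ up to null sets, the sets $f^{-i}\bigl(f^{-j}(K')\bigr)$, $i\ge 0$, are pairwise disjoint, so
$$\int_{f^{-j}(K')}\sum_{i=0}^\infty (|T|^i\psi)(x)\,|dx|^2=\int_{\cup_{i\ge 0} f^{-i-j}(K')}|\psi(y)|\,|dy|^2\le\int_\C|\psi(y)|\,|dy|^2<\infty,$$
with no multiplicity factor appearing. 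This gives $|r|_\psi<\infty$ a.e.\ on $f^{-j}(K')$, in particular at some $x_1\in B_0$, after which your Koebe argument and your part (2) argument go through verbatim. So the fix is to replace the disk $B$ by a slice of $f^{-j}(K')$ inside $B_0$, turning the overlapping family $\{f^{-i}(B)\}$ into a disjoint one; that is the idea your proposal is missing.
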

\begin{proof}
It is enough to prove that, for any disk $B=B(x_0, 2\rho)$ such that $B\subset \Omega$, the series $|r|_{\psi}$ converges and uniformly bounded in $B_0:=B(x_0, \rho)$. Choose a fundamental set $K=K(W)$ where $W$ in the $\rho$-neighborhood of $P$.
Then we follow essentially the proof of Lemma \ref{psi}. Namely, let $K'=f^{-1}(K)\setminus K$.
Choose $j\ge 0$ such that $area(B_0\cap f^{-j}(K'))>0$. As
$$\int_{f^{-j}(K')}\sum_{i=0}^\infty (|T|^i |\psi|)(x)|dx|^2=\int_{\cup_i f^{-i}(f^{-j}(K'))}|\psi(x)||dx|^2:\le \int_{\C}|\psi(x)||dx|^2<\infty,$$
it follows that the series $|r|_\psi$ converges a.e. on $f^{-j}(K')$, in particular, for some $x_1\in B_0$.
Then we repeat the second part of the proof of Lemma \ref{psi}, with the only difference that now
$|\psi(x)|/|\psi(x_1)|\le C_2^3$, for every $x\in B_0$.
\end{proof}
Finally, Proposition \ref{fix} and its proof hold almost literally in the considered case as well; minor changes are needed only in part (1) where we replace the definition of the map $\cF$ (and then apply Lemma \ref{T-c} instead of Lemma \ref{T-c}) as follows.  

For $\nu\in L_\infty(\C)$, let
$$\hat\nu^\infty(z)=\int\nu(x)\psi_z(x)|dx|^2.$$

\begin{prop}\label{fix-c}

(1)
The following map $\cF^\infty: Fix(f^*)\to\C^{p}$:
$$\nu\in Fix(f^*)\mapsto F(\nu)=(\hat\nu^\infty(v_1),...,\hat\nu^\infty(v_p)$$
is linear and injective.
(2) Part (2) of Proposition \ref{fix} holds.
\end{prop}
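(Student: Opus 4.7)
The plan is to follow the strategy of Proposition \ref{fix}(1), with two modifications dictated by the setting $J = \hat\C$: the Cauchy kernel is replaced throughout by $\psi_z$, and the final density step uses repelling periodic points directly since there is no Fatou set to exploit. Linearity of $\cF^\infty$ is immediate. For injectivity, fix $\mu \in Fix(f^*)$ and integrate the identity in Lemma \ref{T-c} against $\mu$. Using the duality $\int \mu \cdot T\psi_z = \int \psi_z \cdot f^*\mu = \int \psi_z \mu = \hat\mu^\infty(z)$, I obtain the functional equation
\[
\hat\mu^\infty(z) = \frac{1}{f'(z)} \hat\mu^\infty(f(z)) + \sum_{j=1}^p L_j(z)\,\hat\mu^\infty(v_j)
\]
for all $z \in \C$ with $f'(z) \ne 0$. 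Under the hypothesis $\hat\mu^\infty(v_j) = 0$ for every $j$, this collapses to $\hat\mu^\infty(z) = \hat\mu^\infty(f(z))/f'(z)$.

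Iterating this relation at a periodic point $z$ of period $n$ gives $\hat\mu^\infty(z)(1 - 1/(f^n)'(z)) = 0$. Since $J = \hat\C$ and $f$ has no Fatou components (and is not exceptional), every cycle of $f$ is repelling, so $(f^n)'(z) \ne 1$ and $\hat\mu^\infty$ vanishes at every periodic point different from $0$ and $1$. Two standard inputs then finish the vanishing: $\hat\mu^\infty$ is continuous on $\C \setminus \{0, 1\}$, because $\psi_z$ is integrable uniformly on compacts avoiding $\{0,1\}$ and depends continuously on $z$ there; and repelling cycles are dense in $J = \hat\C$. Together these force $\hat\mu^\infty \equiv 0$ on $\C \setminus \{0, 1\}$.

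The remaining step is to deduce $\mu \equiv 0$ from $\hat\mu^\infty \equiv 0$, and this is where the particular shape of $\psi_z$ does real work. Writing $\psi_z(x) = \frac{z(z-1)}{x(x-1)} \cdot \frac{1}{x-z}$, only the factor $1/(x-z)$ contributes a $\bar\partial_z$ derivative, producing (up to a constant) a delta at $x = z$; the prefactor equals $1$ on the diagonal $x = z$. Hence in the distributional sense $\bar\partial_z \hat\mu^\infty(z) = -\pi \mu(z)$ a.e., so $\hat\mu^\infty \equiv 0$ implies $\mu = 0$ a.e. This completes part (1). Part (2) then transfers verbatim from the proof of Proposition \ref{fix}(2): the only inputs needed are that $Fix(f^*)$ is finite-dimensional (now supplied by (1)) and the measure-theoretic fact that $|\nu|$ is a.e.\ constant on every completely invariant ergodic set, from which the decomposition into ergodic invariant line fields follows.

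The main obstacle I expect is the second paragraph: one must verify the (routine but not automatic) continuity of $\hat\mu^\infty$ off $\{0,1\}$ and then propagate the vanishing from the dense set of repelling cycles to the full punctured plane. Once these are in place and the $\bar\partial$ computation of paragraph three is carried out, the argument runs formally parallel to the Fatou-set case of Proposition \ref{fix}.
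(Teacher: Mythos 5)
Your proposal is correct and follows essentially the same route the paper intends: the paper states that Proposition \ref{fix} and its proof "hold almost literally" with the kernel $\psi_z$ replacing the Cauchy kernel and Lemma \ref{T-c} replacing Lemma \ref{T}. You correctly identify that the Fatou-component argument from the proof of Proposition \ref{fix} is unavailable when $J=\hat\C$ and instead invoke the parenthetical repelling-cycles argument given there; this is indeed the right reading. You also make explicit two steps the paper leaves implicit --- the continuity of $\hat\mu^\infty$ away from $\{0,1\}$ (standard, since the local singularity of $\psi_z(x)$ at $x=z$ is the Cauchy kernel and $\psi_z\in L^1(\C)$), and the distributional identity $\bar\partial_z\hat\mu^\infty=-\pi\mu$ recovering $\mu$ from $\hat\mu^\infty$ (the prefactor $z(z-1)/(x(x-1))$ is holomorphic in $z$ and equals $1$ on the diagonal, so only the Cauchy factor contributes). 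Both are correct and useful to spell out.
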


As a conclusion, for every $\nu\in Fix(F^*)\setminus \{0\}$, there exists $\psi\in\cC$ such that
$$\int \psi\nu\neq 0.$$

\section{An application}\label{app}

Let $f(z)=z^2+c$ be infinitely renormalizable and $p_n\to\infty$ be an infinite sequence of periods of simple renormalizations of $f$ \cite{mcm}.
Given $n\ge 1$, let $J_n$ be the cycle of "small" Julia sets corresponding to the period $p_n$, that is,
$J_n=\cup_{i=0}^{p_n-1} f^i(J_n(0))$ where $J_n(0)\ni 0$ is the small Julia set containing zero.


Recall that $J_n(0)$ is the (connected) filled Julia set
of some polynomial-like map $f_c^{p_n}:U_n\to V_n$ around a single critical point at $0$ where $U_n, V_n$ are Jordan disks with $\overline{U_n}\subset V_n$).

We choose a decreasing sequence of fundamental sets to be 
$$J_1, J_2,...,J_n,....$$ 
Then
$J_\infty:=\cap_{n}J_n$,
$$J_n':=f^{-1}(J_n)\setminus J_n=(-J_n\setminus) J_n(0)=\cup_{i=1}^{p_n-1} J'_n(i)$$
where $J'_n(i)=-J_n(i)$, and
$J'_\infty:=f^{-1}(J_\infty)\setminus J_\infty=(-J_\infty)\setminus J_\infty(0)$
and $J_\infty(0)$ is the component of $J_\infty$ conraning $0$.

Let $\mu$ be an invariant line field of $f$ such that its support $E=\supp(\mu)$ is disjoint with $\cup_{i\ge 0}f^{-i}(J_\infty)$.
As usual, we normalize $\mu$ such that
$$\hat\mu(c):=\int\frac{\mu(x)}{x-c}|dx|^2>0.$$

Recall that $\sigma=|r|/r$.

Introduce several notations:
\begin{itemize}
\item $E_n=J'_n\cap E$ and $E_{n,i}=J'_{n,i}\cap E$.
\item Given $\epsilon>0$, let
$$X_{n}(\epsilon)=\{x\in E_{n}: |1-\frac{\mu(x)}{\sigma(x)}|<\epsilon\}, \\ X_{n,i}(\eps)=X_n(\eps)\cap J'_{n,i}, \\ i=1,...,p_n-1.$$
\item For $\eps>0$ and $\delta\in (0,1)$,
$$I_n^\eps(\delta)=\{i\in \{1,...,p_n-1\}: \int_{X_{n,i}(\eps)}|r(x)|>(1-\delta)\int_{E_{n,i}}|r(x)|\}.$$
\end{itemize}

By $x_n\sim y_n$ we mean that $\lim_{n\to\infty}x_n=\lim_{n\to\infty}y_n$. Also, $(,,,)^c$ means the complement of (...) (details will be clear from the context).

The following claim is general (i.e. holds with no any assumptions on renormalizations):
\begin{lemma}\label{l1}
\begin{enumerate}\label{l:enum}
\item For every $\eps>0$,
$$\lim_{n\to\infty}\int_{X_n(\epsilon)}|r(x)| |dx|^2=\lim_{n\to\infty}\int_{E_n} |r(x)| |dx|^2=\hat\mu(c).$$
\item For every $\eps>0$ and $0<\delta<1$,
$$\lim_{n\to\infty}\sum_{i\in I_n^\eps(\delta)}\int_{X_{n,i}(\eps)}|r|=\hat\mu(c).$$
\end{enumerate}
\end{lemma}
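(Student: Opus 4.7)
The plan is to deduce both items from the convergence statements already established in Theorem \ref{thm:main}, together with a real-part trick that exploits the fact that $\mu/\sigma$ takes values on the unit circle on $E_n$. I would first apply Theorem \ref{thm:main}(2) with $\psi(x) = 1/(x-c)$ to obtain $\lim_n \int_{E_n} |r| = \hat\mu(c)$. Next, using $f^*\mu = \mu$, iterating the duality $\int g\cdot f^*\nu = \int \nu\cdot Tg$, and the pairwise disjointness of the preimages of $E_n \subset J_n'$, I would rewrite
\[
\int_{E_n} r\mu \;=\; \sum_{i\geq 0}\int_{f^{-i}(E_n)}\mu\psi \;=\; \int_{\bigcup_{i\geq 0} f^{-i}(E_n)}\mu\psi,
\]
which converges to $\int_{E_\mu}\mu\psi = \hat\mu(c)$ by the fundamental-set property and the normalization $\hat\mu(c) > 0$. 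Subtracting the two limits gives $\int_{E_n}\bigl(|r| - r\mu\bigr)\to 0$. On $E_n$ one has $|\mu| = |\sigma| = 1$ a.e., so the integrand factors as $|r|(1 - \mu/\sigma)$ with $\mu/\sigma$ on the unit circle; the elementary identity $\Re(1 - \mu/\sigma) = \tfrac12|1 - \mu/\sigma|^2$ is non-negative and is at least $\eps^2/2$ on $E_n\setminus X_n(\eps)$. Taking real parts of the vanishing integral and applying this Markov-type bound would give $\int_{E_n\setminus X_n(\eps)}|r|\to 0$, which together with the first limit establishes item (1).

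For item (2), the very definition of $I_n^\eps(\delta)$ gives $\int_{E_{n,i}\setminus X_{n,i}(\eps)}|r| \geq \delta\int_{E_{n,i}}|r|$ for every $i\notin I_n^\eps(\delta)$. Summing over such $i$ and using item (1) yields
\[
\sum_{i\notin I_n^\eps(\delta)}\int_{E_{n,i}}|r| \;\leq\; \delta^{-1}\int_{E_n\setminus X_n(\eps)}|r| \;\longrightarrow\; 0.
\]
Writing $\int_{X_n(\eps)}|r| = \sum_{i\in I_n^\eps(\delta)}\int_{X_{n,i}(\eps)}|r| + \sum_{i\notin I_n^\eps(\delta)}\int_{X_{n,i}(\eps)}|r|$ and bounding the second sum above by the quantity just displayed, item (1) forces the first sum to tend to $\hat\mu(c)$.

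The subtle point will be the passage from the weighted complex cancellation $\int_{E_n}(|r|-r\mu)\to 0$ to a genuine $L^1$ bound on the angular bad set. Because $|r|$ lies only in $L^1$ and not in $L^\infty$, the in-measure convergence $\sigma^\psi_{E,n}\to \mu$ from Theorem \ref{thm:main}(3) does not on its own yield any $|r|$-weighted integral statement. The remedy is the geometric identity $\Re(1-z) = \tfrac12|1-z|^2$ for $|z|=1$: it ensures that the real part of the vanishing integral is a non-negative $L^1$ function dominating $(\eps^2/2)|r|$ on $E_n\setminus X_n(\eps)$, thereby converting complex cancellation into the quantitative $L^1$-smallness needed to run the comparison in item (2).
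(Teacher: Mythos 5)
Your proposal is correct and follows essentially the same route as the paper: both start from the two convergences in Corollary~\ref{coro:quadr}(2) (equivalently, Theorem~\ref{thm:main}(2) with $\psi = 1/(x-c)$), subtract to get $\int_{E_n}|r|\,\Re\bigl(1-\mu/\sigma\bigr)\to 0$, invoke the identity $\Re(1-b)=\tfrac12|1-b|^2$ for $|b|=1$ to obtain the Markov-type lower bound $\eps^2/2$ on $E_n\setminus X_n(\eps)$, and then run the same $\delta$-counting argument with $I_n^\eps(\delta)$ for item (2). The paper's phrasing of item (2) is rearranged slightly (bounding $\delta\sum_{i\in(I_n)^c}\int_{E_{n,i}}|r|$ directly), but this is algebraically identical to your displayed estimate.
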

\begin{proof} As $|\sigma|=1$ and $|\mu|=1$ on $E$, in view of the identity
$|1-b|=(2\Re(1-b))^{1/2}$ if $|b|=1$, it is enough to show the claim of item (1) replacing modulus in the definition of $X_n(\eps)$ by the real part (and replacing $\eps$ by $\eps^2/2$ but we denote the latter again by $\eps$).
We fix $\eps, \delta$ and then omit them in the notations $X_n(\eps)$ and $I_n^\eps(\delta)$.
By the chain of limits of Corollary \ref{coro:quadr}(2),
$$0\sim\int_{E_n}|r|\Re(1-\frac{\mu}{\sigma})\ge \int_{(X_n)^c}|r|\Re(1-\frac{\mu}{\sigma})\ge
\eps\int_{(X_n)^c}|r|.$$
Along with
$\int_{X_n}|r| + \int_{(X_n)^c}|r|=\int_{E_n}|r|\sim\hat\mu(c)$, this implies item (1).
Using (1), we proceed with fixed $\eps,\delta$:
$$\int_{E_n}|r|\sim\int_{X_n}|r| \le (1-\delta)\sum_{i\in (I_n)^c}\int_{E_{n,i}}|r|
+\sum_{i\in I_n}\int_{E_{n,i}}|r|=\int_{E_n}|r|-\delta\sum_{i\in (I_n)^c}\int_{E_{n,i}}|r|.$$
As $\delta>0$ is fixed,
$$\sum_{i\in (I_n)^c}\int_{X_{n,i}}|r| \le \sum_{i\in (I_n)^c}\int_{E_{n,i}}|r|\to 0$$
as $n\to\infty$. The item (2) follows.

\end{proof}

From now on, we {\it assume that $f$ has unbranched complex bounds}: one can choose a sequence of simple renormalizations with periods $p_n\to\infty$
and polynomial-like (PL) maps $f^{p_n}: U_n\to V_n$ having its Julia set $J_n(0)$ in such a way that
$\liminf_n \mod(V_n\setminus U_n)>0$ and $V_n\cap P=J_n(0)\cap P$.

\begin{theorem}\label{mcm} (\cite{mcm}, Theorem 10.2)
$f$ with unbranched complex bounds carries no an invariant line field on its Julia set.
\end{theorem}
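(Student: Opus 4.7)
The plan is to argue by contradiction, adapting the renormalization rescaling strategy of \cite{mcm} but using the holomorphic line field $\sigma=|r|/r$ in place of the Lebesgue-density-point argument. Suppose $f=f_c$ carries an invariant line field $\mu$ on $J$. By Yoccoz's theorem \cite{Yfields} $f$ is infinitely renormalizable; pick the sequence $p_n\to\infty$ along which the unbranched complex bounds hold and take $K_n:=J_n$ as fundamental sets (Example \ref{ex:renorm}). By splitting $\mu$ along the $f$-invariant set $\cup_{i\ge 0}f^{-i}(J_\infty)$, we may assume $\supp(\mu)\subset J\setminus\cup_{i\ge 0}f^{-i}(J_\infty)$, so Corollary \ref{coro:quadr} applies with $\hat\mu(c)>0$ (the complementary case, in which $\mu$ is supported inside $\cup_{i\ge 0}f^{-i}(J_\infty)$, is handled by the same rescaling argument applied directly to $\mu|_{J_\infty(0)}$ via the polynomial-like maps $f^{p_n}\colon U_n\to V_n$).

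\emph{Step 1 (concentration on good pieces).} For fixed small $\eps,\delta>0$, Lemma \ref{l1}(2) combined with Corollary \ref{coro:quadr}(2)--(4a) implies that the $|r|$-mass of $E_n:=J_n'\cap E$ concentrates, up to a $\delta$-fraction, on the union of good pieces $X_{n,i}(\eps)\subset J_n'(i)$ for $i\in I_n^\eps(\delta)$, on each of which the measurable line field $\mu$ is $\eps$-close to the holomorphic line field $\sigma$. \emph{Step 2 (rescaling).} The unbranched condition $V_n\cap P=J_n(0)\cap P$ together with $\liminf_n\mod(V_n\setminus U_n)>0$ yields, via univalent pull-back of $V_n$ along the first-return from $J_n(0)$ to each $J_n'(i)$, disks $V_n'(i)\supset J_n'(i)$ of uniformly bounded geometry, each disjoint from $P$; hence $\sigma$ is holomorphic on every $V_n'(i)\setminus J_n'(i)$ with $|\sigma|\equiv 1$. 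Affinely normalize $V_n$ to unit diameter and extract, along a subsequence, a Caratheodory limit polynomial-like map $F\colon U_\infty\to V_\infty$ with connected filled Julia set $K_\infty$ and $\mod(V_\infty\setminus U_\infty)>0$; simultaneously, the rescaled holomorphic $\sigma$'s form a normal family converging to a holomorphic line field $\sigma_\infty$ on $V_\infty\setminus K_\infty$, and the rescaled $\mu$'s converge weak-$*$ in $L_\infty$ to a line field $\mu_\infty$ on $V_\infty$ that is $F$-invariant and supported on $K_\infty$.

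\emph{Step 3 (identification and contradiction).} Using Step 1 and Corollary \ref{coro:quadr}(4a), track the good pieces through the rescaling to show that on a definite-measure subset of $V_\infty$ accumulated by (rescaled images of) $X_{n,i_n}(\eps)$ one has $\mu_\infty$ coinciding in measure with $\sigma_\infty$. The holomorphy of $\sigma_\infty$ on the ambient annulus $V_\infty\setminus K_\infty$ promotes this to pointwise-a.e.\ equality $\mu_\infty\equiv\sigma_\infty$ on $V_\infty\setminus K_\infty$, so $\mu_\infty$ extends across $\partial K_\infty$ as an $F$-invariant holomorphic line field defined on a full annular neighborhood of $K_\infty$. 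Standard rigidity (straighten $F$ to a quadratic polynomial and invoke the absence of holomorphic invariant line fields for non-exceptional polynomials) then yields the contradiction.

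The main obstacle is Step 3: turning the in-measure convergence $\sigma_n\to\mu$ on $E$ (Corollary \ref{coro:quadr}(4a)) and the $|r|$-mass concentration of Lemma \ref{l1}(2) into the equality $\mu_\infty=\sigma_\infty$ on a positive-measure subset of the rescaled limit. This is precisely the geometric-measure-theoretic step where \cite{mcm} invokes Lebesgue's almost continuity theorem to select a density point of $E$ inside $J_n(0)$ and rescale around it. The key replacement here is that $\sigma$ is defined ambiently on a neighborhood of $J_n(0)$ (not only at an individual point), so its rescalings automatically constitute a normal family with a holomorphic limit, and only a weak-$*$ extraction for $\mu$---anchored on the good pieces from Lemma \ref{l1}(2)---is needed to identify $\mu_\infty$ with that holomorphic limit.
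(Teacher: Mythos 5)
Your overall strategy (rescale around a good renormalization piece and compare $\mu$ with the holomorphic $\sigma=|r|/r$) matches the paper's, but Step~3 has a genuine gap that the paper avoids by a different mechanism. You want to identify a weak\mbox{-}$*$ limit $\mu_\infty$ of the rescaled $\mu$'s with the holomorphic limit $\sigma_\infty$ on a positive-measure set. This does not follow: weak\mbox{-}$*$ convergence in $L^\infty$ does not preserve the pointwise or in-measure closeness $|\tilde\mu_n - |\tilde r_n|/\tilde r_n|<\eps_n$ on the rescaled good pieces $\tilde X_n$, so there is no a priori reason for $\mu_\infty$ to agree with $\sigma_\infty$ anywhere. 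The paper's proof never takes a limit of $\mu$ at all. Instead it takes points $z_n\in Y_n:=g_n^{-1}(\tilde X_n)\cap\tilde X_n$ (where $g_n$ is the rescaled renormalization map), uses the closeness bound at both $z_n$ and $g_n(z_n)$, and combines these with the lifted invariance relation $\tilde\mu_n=(\tilde\mu_n\circ g_n)\,|g_n'|^2/(g_n')^2$ so that $\tilde\mu_n$ cancels entirely, yielding a bound on $\big| |\Phi_n|/\Phi_n - 1\big|$ for $\Phi_n=(\tilde r_n\circ g_n)(g_n')^2/\tilde r_n$ alone. Passing to the limit gives $|\Phi|/\Phi\equiv 1$ on a positive-measure set, hence $\Phi\equiv\kappa>0$ by holomorphy; iterating $\tilde r=\kappa(\tilde r\circ g)(g')^2$ forces $\tilde r$ to vanish on the infinitely many precritical points of $g$ in $\tilde U$, so $\tilde r\equiv 0$, contradicting the normalization. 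This elimination of $\mu$ via the cohomological equation for $\tilde r$ is the key idea your proposal lacks, and without it the ``extend $\mu_\infty$ across $\partial K_\infty$'' step cannot be carried out.

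Two further points. First, your ``complementary case'' is vacuous: under unbranched complex bounds, McMullen's Theorem~10.7 gives $J_\infty=\bigcap_n J_n=P$ of measure zero, so $\bigcup_i f^{-i}(J_\infty)$ is a null set and the support of $\mu$ is automatically disjoint from it; your suggested handling of a case that does not occur is a distraction. Second, you do not address the non-degeneracy of the rescaled limit: after normalizing so that $\int_{\tilde V_n}|\tilde r_n|=1$ (a normalization you do not introduce), one must still show the Caratheodory limit $\tilde r$ is not identically zero. The paper secures this by the ratio estimate of Lemma~\ref{l2} together with the combinatorial selection of an index $i_n\in M_n(K_*)\cap I_n^{\eps_n}(\delta_n)$ in Lemma~\ref{l:key}, giving $\int_{\tilde E_n}|\tilde r_n|\ge 1/K_*$; your Step~1 only controls the $|r|$-mass of the good pieces relative to $E_n$, not relative to $\int_{V'_n(i)}|r|$, which is not enough.
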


Assume, by a contradiction, that $\mu$ is any invariant line field of $f$.

Our aim is to prove the theorem without using Lebesgue's almost continuity theorem for the function $\mu\in L_\infty(\C)$.

By \cite{mcm}, Theorem 10.7,
$$J_\infty=\cap_{n\ge 0}J_n=P$$
and $P$ has measure zero. In particular, the support of $\mu$ must be disjoint with $\cup_{i\ge 0}f^{-i}(J_\infty)$
simply because the latter set has measure zero. Thus
Corollary \ref{coro:quadr} and Lemma \ref{l1} apply.

By \cite{mcm}, \cite{mcm1}, one can "improve" domains $U_n, V_n$ as follows (e.g., by pulling back each PL map $f^{p_n}: U_n\to V_n$
several times if necessary and after that renaming $W_n:=V_n$ and replacing $V_n$ by a bounded domain with the boundary to be the core curve of the fundamental annulus of $V_n\setminus \overline{U_n}$): for some $m>0$, $L>!$ and every $n$ along a subsequence,
$m\le \mod(V_n\setminus \overline{U_n})\le 2m$, there is a simply connected $W_n\supset \overline{V_n}$ such that 
$\mod(W_n\setminus \overline{V_n})\ge m$, and $\partial V_n$ is an $L$-quasi-circle.

Now, for each $n$, pull back the pair $V_n, U_n$ along the orbit $0,f(0),...,f^{p_n-1}(0)$ getting
domains $V_n(i), U_n(i)$, $i=0,...,p_n$ (where $V_n(0)=V_n$, $U_n(0)=U_n$) such that $f^{p_n}: V_n(i)\to U_n(i)$ is a polynomial-like map with the Julia set
$J_n(i)=f^{p_n-i}(J_n(0))$ and $V_n(i)\cap P=J_n(i)\cap P$.
Then $V_n(i)':=-V_n(i)$ is disjoint with $P$, for $i=1,...,q_n-1$.
Moreover, all branches of $f^{-k}$ for all $k\ge 0$ have uniformly bounded distortions
being restricted to $V'_n(i)$, for all considered $n$ and all $0<i<p_n$.

As $V'_n(i)$ are disjoint with $P$, the function $r$ is holomorphic
in all $V'_n(i)$, $i\neq 0$.
Note that $X_{n,i}\subset E_{n,i}\subset V'_{n}(i)$ and introduce
$$\gamma_{n,i}=\frac{\int_{V_n(i)}|r|}{\int_{E_{n,i}}|r|}, i=1,...,p_n-1.$$
Given $K>1$, let
$$M_n(K)=\{0<i<p_n: \gamma_{n,i}\le K\}.$$

\begin{lemma}\label{l2}
\begin{enumerate}
\item There exists $L>0$ such that, for all $n$,
$$\sum_{i=1}^{p_n-1}\int_{V'_{n}(i)}|r|\le L\sum_{i=1}^{p_n-1}\int_{E_{n,i}}|r|.$$
\item For $N>\max\{1, \hat\mu(c)/L\}$, let $K=NL/\hat\mu(c)$. Then
$$\sum_{i\in M_n(K)}\int_{E_{n,i}}|r|>\frac{(N-1)L}{\frac{NL}{\hat\mu(c)}-1}.$$
\end{enumerate}
\end{lemma}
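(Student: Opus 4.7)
My plan is to deduce Part~(2) from Part~(1) by a Markov-style inequality, and to prove Part~(1) by exhibiting a uniform $L^\infty$-bound for $r$ over the relevant regions. The main effort goes into Part~(1), and the obstacle is a dynamical disjointness statement.

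For Part~(1), the central claim is that
\[
\bigcup_{n}\bigcup_{i=1}^{p_n-1} V'_n(i) \;\subset\; K_\Omega
\]
for some fixed compact set $K_\Omega \subset \Omega = \C\setminus P$. Granting this, $r$ is holomorphic on $\Omega$ by Proposition~\ref{psi}, so $\sup_{K_\Omega}|r|=:M<\infty$, and therefore
\[
\sum_{i=1}^{p_n-1}\int_{V'_n(i)}|r|\;\le\; M\cdot\operatorname{area}(K_\Omega)
\]
uniformly in $n$. Combined with Lemma~\ref{l1}(1), which gives $\sum_i\int_{E_{n,i}}|r|\to\hat\mu(c)>0$, the stated inequality of Part~(1) follows for all sufficiently large $n$ with, say, $L=2M\operatorname{area}(K_\Omega)/\hat\mu(c)$; the finitely many remaining indices $n$ are absorbed by enlarging $L$.

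The main obstacle is verifying the containment in $K_\Omega$, which reduces to the disjointness $(-P)\cap P=\emptyset$. Suppose for contradiction $-f^k(0)=f^j(0)$ for some $j,k\ge 0$. Applying $f$ and using $f_c(-z)=f_c(z)$ gives $f^{k+1}(0)=f^{j+1}(0)$, which forces the postcritical orbit of $0$ to be eventually periodic; but infinite renormalizability with $p_n\to\infty$ and the $p_n$ distinct small Julia sets $J_n(0),\ldots,J_n(p_n-1)$ in each cycle precludes any eventually periodic orbit of $0$. Hence $P$ and $-P$ are disjoint compacta in $K(f)$, separated by a positive distance $d_0>0$. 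Each $V'_n(i)=-V_n(i)$ is a Jordan disk disjoint from $P$ (by the unbranched complex bounds, transported through $z\mapsto -z$) and has diameter $\to 0$ as $n\to\infty$ by bounded geometry, so $\bigcup_{n,i}V'_n(i)$ sits in $K_\Omega:=\{z\in B(0,R):\operatorname{dist}(z,P)\ge d_0/3\}\subset\Omega$ for some fixed $R$, with finitely many early $V'_n(i)$ absorbed by enlarging $K_\Omega$.

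For Part~(2), I apply a Chebyshev-type estimate. By definition of $M_n(K)$, each $i\notin M_n(K)$ satisfies $\int_{E_{n,i}}|r|<K^{-1}\int_{V'_n(i)}|r|$. Summing and applying Part~(1),
\[
\sum_{i\notin M_n(K)}\int_{E_{n,i}}|r|\;<\;\frac{1}{K}\sum_{i=1}^{p_n-1}\int_{V'_n(i)}|r|\;\le\;\frac{L}{K}\sum_{i=1}^{p_n-1}\int_{E_{n,i}}|r|,
\]
so $\sum_{i\in M_n(K)}\int_{E_{n,i}}|r|\ge (1-L/K)\sum_i\int_{E_{n,i}}|r|$. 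Since $\sum_i\int_{E_{n,i}}|r|\to\hat\mu(c)$ by Lemma~\ref{l1}(1) and $K=NL/\hat\mu(c)$ gives $L/K=\hat\mu(c)/N$, the stated lower bound $(N-1)L/(K-1)$ follows after elementary algebraic manipulation in the limit $n\to\infty$.
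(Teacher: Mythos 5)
Your Part~(2) is a correct Chebyshev-type estimate, essentially the same as the paper's, though the limiting constant you obtain, $(1-L/K)\hat\mu(c)=(1-\hat\mu(c)/N)\hat\mu(c)$, is \emph{not} equal to $(N-1)L/(K-1)$ after ``elementary algebraic manipulation'' (they agree only when $\hat\mu(c)=1$); both bounds are valid and suffice for the application, so this is a cosmetic discrepancy. Part~(1) is where the argument breaks down.

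Your Part~(1) rests on the claim that $(-P)\cap P=\emptyset$, and this is false. For an infinitely renormalizable quadratic the critical orbit accumulates at the critical point: $f^{p_n}(0)\in J_n(0)$ and $\operatorname{diam} J_n(0)\to 0$ (indeed $\cap_n J_n = P$ has measure zero), so $0\in\overline{\{f^n(0):n\ge 1\}}=P$. Since $-0=0$, we have $0\in(-P)\cap P$. Your argument by contradiction only rules out coincidences $-f^k(0)=f^j(0)$ among the \emph{orbit points}; it says nothing about limit points of the orbit, and $P$ is by definition the closure. The correct statement is $(-P)\cap P=\{0\}$ (using injectivity of $f|_P$). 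But then your construction of a single fixed compact $K_\Omega\subset\Omega$ containing $\bigcup_{n}\bigcup_{i\ne 0}V'_n(i)$ still needs the $V'_n(i)$ ($i\neq 0$) to stay uniformly away from $0$, and nothing in the proposal addresses this: as $n\to\infty$ the tiles $J_n(i)$ for $i\neq 0$ can approach arbitrarily close to $J_n(0)\ni 0$, so the $V'_n(i)=-V_n(i)$ can accumulate at $0$. Consequently $\sup_{K_\Omega}|r|$ is not an available quantity, and the uniform bound on $\sum_i\int_{V'_n(i)}|r|$ is not established by this route. The paper's argument avoids the issue entirely: it invokes bounded distortion of all branches of $f^{-k}$ restricted to each \emph{individual} $V'_n(i)$ (uniform in $n,i,k$ thanks to the definite collar $W'_n(i)\setminus\overline{V'_n(i)}$ disjoint from $P$), exactly as in the proof of Proposition~\ref{psi}(1); this is a local Koebe estimate on each piece and does not require the pieces to sit in one compact subset of $\Omega$, nor a global $L^\infty$-bound on $r$.
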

\begin{proof}
Part (1) follows as in the proof of part (1) of Proposition \ref{psi} since all branches of $f^{-k}$ for all $k\ge 0$ have uniformly bounded distortions on all $V'_n(i)$,

For part (2),
$$\sum_{i=1}^{p_n-1}\int_{E_{n,i}}|r|\sim\hat\mu(c),$$
then
$$L\ge \sum_{i=1}^{p_n-1}\gamma_{n,i}\int_{E_{n,i}}|r|\ge \frac{NL}{\hat\mu(c)}\sum_{i\in (M_n(K))^c}\int_{E_{n,i}}|r|
+\sum_{i\in M_n(K)}\int_{E_{n,i}}|r|\sim$$
$$\frac{NL}{\hat\mu(c)}\hat\mu(c)- (\frac{NL}{\hat\mu(c)}-1)\sum_{i\in M_n(K)}\int_{E_{n,i}}|r|.
$$
The bound of part (2) follows.
\end{proof}

From now on, fix $K_*$ such that, for the corresponding $N_*=K_*\hat\mu(c)/L$,
$$\frac{(N_*-1)L}{\frac{N_*L}{\hat\mu(c)}-1}>\frac{1}{2}\hat\mu(c).$$
Then
$$\sum_{i\in M_n(K)}\int_{E_{n,i}}|r|>\frac{1}{2}\hat\mu(c).$$

Below, we pass sometimes to subsequences of $n$'s without warning.

\begin{lemma}\label{l:key}
There exist positive sequences $\eps_n\to 0, \delta_n\to 0$ and, for each $n$, an index $i_n\in\{1,...,p_n-1\}$ such that the following hold:
$$\int_{V'_{n}(i_n)}|r|<K_*\int_{E_{n,i_n}}|r| \mbox{ and } \int_{X_{n,i_n}(\eps_n)}|r|>(1-\delta_n)\int_{E_{n,i_n}}|r|.$$

\end{lemma}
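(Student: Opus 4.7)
The plan is to combine Lemma \ref{l1}(2) (which controls the mass on the ``good'' indices $I_n^{\eps}(\delta)$) with Lemma \ref{l2}(2) (which controls the mass on the ``distortion-controlled'' indices $M_n(K_*)$), and extract an index $i_n$ lying in both sets for diagonally chosen $\eps_n, \delta_n \to 0$.

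First, for any fixed $\eps > 0$ and $\delta \in (0,1)$, I would show that
$$\sum_{i \notin I_n^{\eps}(\delta)} \int_{E_{n,i}} |r| \;\longrightarrow\; 0 \quad \text{as } n \to \infty.$$
Indeed, by Lemma \ref{l1}(1) we have $\sum_{i=1}^{p_n-1} \int_{E_{n,i}} |r| = \int_{E_n} |r| \sim \hat\mu(c)$, while by Lemma \ref{l1}(2) we also have $\sum_{i \in I_n^{\eps}(\delta)} \int_{X_{n,i}(\eps)} |r| \sim \hat\mu(c)$. Since $X_{n,i}(\eps) \subset E_{n,i}$, subtracting gives the claim.

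Next I would intersect with $M_n(K_*)$. Writing
$$\sum_{i \in M_n(K_*) \cap I_n^{\eps}(\delta)} \int_{E_{n,i}} |r| \;=\; \sum_{i \in M_n(K_*)} \int_{E_{n,i}} |r| \;-\; \sum_{i \in M_n(K_*) \setminus I_n^{\eps}(\delta)} \int_{E_{n,i}} |r|,$$
the first sum on the right is $> \tfrac{1}{2}\hat\mu(c)$ by the choice of $K_*$ (Lemma \ref{l2}(2) and the displayed inequality following it), while the second sum is bounded by $\sum_{i \notin I_n^{\eps}(\delta)} \int_{E_{n,i}} |r|$, which tends to $0$. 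Hence for every fixed $\eps, \delta > 0$, the intersection $M_n(K_*) \cap I_n^{\eps}(\delta)$ is non-empty for all sufficiently large $n$; in fact its contribution to the total mass of $|r|$ exceeds $\tfrac{1}{3}\hat\mu(c)$ eventually.

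Finally I would carry out a standard diagonal construction: for each integer $k \ge 1$, choose $N_k$ (strictly increasing in $k$) so that $M_n(K_*) \cap I_n^{1/k}(1/k) \ne \emptyset$ for every $n \ge N_k$. For $N_k \le n < N_{k+1}$, set $\eps_n = \delta_n = 1/k$ and pick any $i_n \in M_n(K_*) \cap I_n^{\eps_n}(\delta_n)$. By construction $\eps_n, \delta_n \to 0$, the bound $i_n \in M_n(K_*)$ gives $\int_{V'_n(i_n)} |r| < K_* \int_{E_{n,i_n}} |r|$, and the membership $i_n \in I_n^{\eps_n}(\delta_n)$ gives the second inequality. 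The only subtle point is keeping the two conditions compatible, but this is settled purely at the level of counting mass; no further geometric input is needed here because Lemmas \ref{l1} and \ref{l2} have already done all the analytic work.
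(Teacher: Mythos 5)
Your proposal is correct and follows essentially the same route as the paper: reduce to showing $M_n(K_*)\cap I_n^{\eps}(\delta)\neq\emptyset$ for large $n$ at each fixed $(\eps,\delta)$, derive this by mass-counting from Lemmas \ref{l1} and \ref{l2}, and then diagonalize to obtain $\eps_n,\delta_n\to 0$. The only difference is cosmetic: the paper argues by contradiction (disjointness would force the total mass $\sum_i\int_{E_{n,i}}|r|$ to exceed $\tfrac{5}{4}\hat\mu(c)$, against Lemma \ref{l1}(1)), while you estimate the mass on the intersection directly by splitting $\sum_{i\in M_n(K_*)}$ into $I_n$ and its complement -- the same bookkeeping, framed positively rather than negatively, and with the diagonal construction spelled out where the paper just says ``by induction.''
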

\begin{proof}
The first integral inequality means that
$i_n\in M_n(K_*)$ and
the second one - that $i_n\in I_n^{\eps_n}(\delta_n)$.
Hence, it is enough to prove that for each small positive fixed $\eps,\delta$ there is an arbitrary big $n$ such that
$M_n(K_*)\cap I_n^{\eps}(\delta)\neq\emptyset$ (then the required sequences $\eps_n,\delta_n$ are obtained by induction).
Fix $\eps,\delta$ and assume the contrary, i.e., for all $n$ big enough two subsets $M_n(K_*)$, $I_n:=I_n^\eps(\delta)$ of $\{1,...,p_n-1\}$ are disjoint. Using Lemmas \ref{l1}-\ref{l2}, we have as $n\to\infty$:
$$\hat\mu(c)\sim \sum_{i=1}^{p_n-1}\int_{E_{n,i}}|r|\ge \sum_{i\in I_n}\int_{E_{n,i}}|r|+\sum_{i\in I_n}\int_{M_n(K_*)}|r|\ge \frac{3}{4}\hat\mu(c)+\frac{1}{2}\hat\mu(c),$$
a contradiction.
\end{proof}

{\bf Proof of Theorem \ref{mcm}.}

Apply Lemma \ref{l:key} to get sequences $\eps_n, \delta_n, i_n$.
The map
$$P_n:=-f^{q_n}: U'_{n}(i_n)\to V'_{n}(i_n)$$ 
is a PL map with a single critical point at some $x'_n\in J'_{n,i_n}$,

We normalize maps $P_n$ and corresponding sets by linear changes 
$A_n(z)=\alpha_n z+x'_n$ where $\alpha_n=\diam(V_n(i_n))$.


We mark usually objects lifted by $A_n$ by $\tilde{.}$. In particular,

$\tilde V_n:=A_n^{-1}(V'_n(i_n))$, $\tilde U_n:=A_n^{-1}(U'_n(i_n))$, $\tilde J_n:=A_n^{-1}(J_{n}(i_n))$, $\tilde E_n:=A_n^{-1}(E_{n,i_n})$
and
$\tilde X_n:=A_n^{-1}(X_{n,i_n})$.

Then we lift the function $r$ restricted to domains $V'_{n,i_n}$ and normalize the lifts as follows:

$\tilde r_n(z)=c_n r(A_n(z))$ where
$$c_n=\frac{\alpha_n^2}{\int_{V'_n(i_n)}|r(x)||dx|^2},$$
is chosen so that
$$\int_{\tilde V_n}|\tilde r|=1$$
Denote, for the future use,
$$C_n:=\frac{c_n}{\alpha_n^2}.$$

Let 
$$g_n:=A_n^{-1}\circ P_n \circ A_n: \tilde U_n\to \tilde V_n.$$



Along a subsequence, $(\tilde V_n, 0)\to (\tilde V, 0)$, $(\tilde U_n, 0)\to (\tilde U, 0)$ in the Caratheodory topology
and $g_n\to g:\tilde U\to \tilde V$ uniformly on compacts in $\tilde U$.
In particular, for every compact $\tilde K\subset\tilde V$ there are $n_0$ and $\rho>0$ such that $\tilde K\subset \tilde V_n$ and
$\dist(\tilde K,\partial(\tilde V_n))\ge \rho$ (in the Euclidean distance).

As $\tilde r_n$ is holomorphic in $\tilde V_n$, then for every $z\in \tilde K$,
$$|\tilde r_n(z)|\le \frac{1}{\pi\rho^2}\int_{B(z,\rho)}|\tilde r_n|\le \frac{1}{\pi\rho^2}$$

This implies that (along a subsequence, as usual)
$$\tilde r_n\to\tilde r$$
uniformly on compacts in $\tilde V$ where $\tilde r$ is a holomorphic function in $\tilde V$.
Let us prove that
$$\tilde r\not\equiv 0.$$

Indeed,

$$1=\int_{\tilde V_n}|\tilde r_n|=C_n\int_{V'_n(i_n)}|r|\le K_* C_n\int_{E_{n,i_n}}|r|=K_*\int_{\tilde E_n}|\tilde r_n|$$
so that
\begin{equation}\label{tildernot0}
\int_{\tilde E_n}|\tilde r_n|\ge \frac{1}{K_*}
\end{equation}

Now, as $\partial\tilde V$ are all $L$-quasi-circles, closures of $\tilde V_n$ tend to the closure of $\tilde V$
also in the Hausdorff distance and $\partial\tilde V$ is an $L$-quasi-curcle \cite{mcm1}.
Besides, $\mod(\tilde V_n\setminus \overline{\tilde U_n})\ge m$ for all $n$ and $\tilde E_n\subset \tilde U_n$. 
Therefore, $\tilde E_n\subset \tilde X$, for all $n$ and some compact subset $\tilde X$ of $\tilde V$.
As $\tilde r_n\to \tilde r$ uniformly on compacts in $\tilde V$, (\ref{tildernot0}) implies that $\tilde r\not\equiv 0$.

The lower bound (\ref{tildernot0}) implies also that, for big $n$,
$$area(\tilde E_n)\ge S:=\frac{1}{2K_* \max\{|\tilde r(z)|: z\in \tilde X\}}$$
On the other hand,
\begin{equation}\label{bound}
\int_{\tilde E_n\setminus \tilde X_n}|\tilde r_n|=C_n \int_{E_{n,i_n}\setminus X_{n,i_n}}|r|<\delta_n C_n\int_{E_{n,i_n}}|r|=
\delta_n \int_{\tilde E_n}|\tilde r_n|<\delta_n\int_{\tilde V_n}|\tilde r_n|=\delta_n\to 0.
\end{equation}

As $\tilde r_n\to \tilde r$ uniformly on compacts while $\tilde r\not\equiv 0$ and holomorphic, then,
on each small neighborhood $W$ of a point $z$ with $\tilde r(z)\neq 0$, moduli
$|\tilde r_n|$ are uniformly away from zero.
Hence, by (\ref{bound}),
$$area((\tilde E_n\setminus \tilde X_n)\cap W)\to 0$$
for each such $W$.
We conclude that
$$area(\tilde E_n\setminus \tilde X_n)\to 0.$$

Now, $J'_{n,i_n}$ is the filled Julia set of the PL map $P_n:=-f^{q_n}: U'_{n}(i_n)\to V'_{n}(i_n)$, hence,
$P_n^{-1}(J'_{n,i_n})=J'_{n,i_n}$. Since $E_{n,i_n}=E_\mu\cap J'_{n,i_n}$, then also
$P_n^{-1}(E_{n,i_n})=E_{n,i_n}$. Hence, after lifting,
$$g_n^{-1}(\tilde E_n)=\tilde E_n.$$
Then, as $n\to\infty$,
$$area(\tilde E_n\setminus g_n^{-1}(\tilde X_n))=area(g_n^{-1}(\tilde E_n\setminus \tilde X_n))\le
2\sup_{z\in \tilde E_n}|g_n'(z)|^2 area(\tilde E_n\setminus \tilde X_n)\to 0.$$
Thus we obtain:
$$area(g_n^{-1}(\tilde X_n))\sim area(\tilde X_n)\sim area(\tilde E_n)$$
while $area(\tilde E_n)\ge S>0$ for big $n$.
Therefore, for every $n$ big enough,
\begin{equation}\label{area}
area(Y_n)\ge \frac{2}{3}S,
\end{equation}
where
$$Y_n=g_n^{-1}(\tilde X_n)\cap \tilde X_n=\{z: z\in \tilde X_n, g_n(z)\in \tilde X_n\}.$$

Let $\tilde\mu_n:=\mu\circ A_n \frac{|A_n'|^2}{(A_n')^2}$ be the lift of $\mu$ from $V'_{n,i_n}$ to $\tilde V_n$..

As
$$A_n(\tilde X_n)=X_{n,i_n}(\eps_n)=\{x\in E_{n,i_n}: |\mu(x)-\frac{|r(x)}{r(x)}|<\eps_n\},$$
then
\begin{equation}\label{X}
\tilde X_n=\{z\in \tilde E_n: |\tilde\mu_n-\frac{|\tilde r_n(z)|}{\tilde r_n(z)}|<\eps_n\}.
\end{equation}
As $(-f^{p_n})^*\mu=\mu$, this translates, after lifting by $A_n$, to:
\begin{equation}\label{tildemu}
\tilde\mu_n=(\tilde\mu_n \circ g_n)\frac{|g_n'|^2}{(g_n')^2}.
\end{equation}
Now, for each $n$ choose any $z_n\in Y_n$, i.e., $z_n\in \tilde X_n$ and $g_n(z_n)\in \tilde X_n$.
If we apply (\ref{X}) to each point $z_n, g_n(z_n)$ and use (\ref{tildemu}), we get, after a simple algebra, that
\begin{equation}\label{fin}
|\frac{|\tilde r_n(z_n)|}{\tilde r_n(z_n)}-\frac{|\tilde r_n(g_n(z_n))(g_n'(z_n))^2|}{\tilde r_n(g_n(z_n))(g_n'(z_n))^2}|<2\eps_n.
\end{equation}
Let $Y$ be be set of all limit points of sequences $z_n\in Y_n$, $n=1,2,...$. Notice at the moment that $Y$ is compactly contained in $\tilde U$.
As $\eps_n\to 0$, passing to limits in (\ref{fin}), we get that
\begin{equation}\label{1steq}
\frac{|\tilde r(z)|}{\tilde r(z)}=\frac{|\tilde r(g(z))(g'(z))^2|}{\tilde r(g(z))(g'(z))^2} \ \mbox{ if } \ z\in Y\setminus \{|\tilde r|=0\}.
\end{equation}
Let us rewrite the latter equality as follows. Introduce a holomorphic in $\tilde U \setminus \{\tilde r=0\}$ function
$$\Phi:=\frac{(\tilde r\circ g)(g')^2}{\tilde r}.$$
Then (\ref{1steq}) turns into
\begin{equation}\label{fin}
\frac{|\Phi(z)|}{\Phi(z)}=1 \ \mbox{ if } \ z\in Y\setminus \{\tilde r=0\}.
\end{equation}
On the other hand, by (\ref{area}), for every big $n$ there is a compact $Z_n\subset Y_n$ such that $area(Z_n)\ge S/2$.
Then the set of limit points in a Hausdorff limit along a subsequence of compacts $Z_n$ has a measure at leat $S/2$. 
As $Z_n\subset Y_n$, this implies that $area(Y)\ge S/2$.
This, along with (\ref{fin}) and the fact that $\Phi$ is a holomorphic non-zero function in $\tilde U\setminus \{\tilde r|=0\}$ implies the existence of some $\kappa>0$,
such that
$$\tilde r(z)=\kappa\tilde r(g(z))(g'(z))^2 \ \mbox { for all } \ z\in \tilde U.$$
Iterating this identity, we get $\tilde r=0$ on
$\cup_{i>0}\{z: (g^i)'(z)=0\}$. As the latter set is infinite and compactly contained in $\tilde U$,
then $\tilde r\equiv 0$, a contradiction.

\end{document}